\newcommand{\Z}{\mathbb Z}
\newcommand{\N}{\mathbb N}
\newcommand{\R}{\mathbb R}
\newcommand{\Q}{\mathbb Q}
\newcommand{\sma}{\left(\begin{array}}
\newcommand{\fma}{\end{array}\right)}
\newtheorem{lem}{Lemma}[section]
\newtheorem{defn}[lem]{Definition}
\newtheorem{co}[lem]{Corollary}
\newtheorem{thm}[lem]{Theorem}
\newtheorem{prop}[lem]{Proposition}
\newenvironment{proof}{\textbf{Proof.}}{\newline\hspace*{\fill}{$\Box$}\\}
\begin{document}
\title{Generalised Baumslag-Solitar groups and Hierarchically
  Hyperbolic Groups}

\author{J.\,O.\,Button}
\date{}
\newcommand{\Address}{{
  \bigskip
  \footnotesize

\textsc{Selwyn College, University of Cambridge,
Cambridge CB3 9DQ, UK}\par\nopagebreak
  \textit{E-mail address}: \texttt{j.o.button@dpmms.cam.ac.uk}
}}

\maketitle
\begin{abstract}
  We look at isometric actions on arbitrary hyperbolic spaces of
  generalised Baumslag - Solitar groups of any rank
  (the rank of the free abelian vertex and edge subgroups).
It is known that being a hierarchically
hyperbolic group is not a quasi-isometric invariant. We show that
virtually being a hierarchically hyperbolic group is not invariant
under quasi-isometry either, and nor is property (QT).
\end{abstract}

\section{Introduction}

Gromov's notion of a word hyperbolic group encapsulates what it means
for a finitely presented group to be negatively curved. In particular this
property is not just a commensurability invariant (that is, if $H$ is a finite
index subgroup of $G$, for which we write $H\leq_f G$, then $H$ has the
property if and only if $G$ has the property) but it is also invariant
under quasi-isometry.

However trying to come up with an equivalent notion of
non positive curvature for finitely presented groups which is also invariant
under quasi-isometry seems less clear. One
definition that is invariant is that of having (at most) quadratic
Dehn function but this contains groups that would not naturally be thought of
as being non positively curved. To give but one example, in \cite{dects}
it is shown that for each $n\geq 2$
there exists a metabelian group with quadratic Dehn function
but which contains the Baumslag - Solitar group $BS(1,n)$, whereas one might
hope that any finitely generated soluble subgroup of a non positively
curved group is virtually abelian.

Even if invariance under quasi-isometry fails, one could at least hope for a
definition of non positive curvature which is invariant under
commensurability. However for the property of being CAT(0) (a group
which acts geometrically on a CAT(0) space), which is known not to be
invariant under quasi-isometry, it is currently open whether $H$ being a
CAT(0) group and $H\leq_f G$ implies that $G$ is a CAT(0) group. It is also
still open whether all hyperbolic groups are CAT(0).

A more recent property that also aims to encapsulate non positive curvature
is that of being a hierarchically hyperbolic group (a HHG for short).
This does include all hyperbolic groups but it was recently shown in
\cite{petspr} that this is not a commensurability invariant as there are
groups which are not HHGs but which have a finite index subgroup equal
to $\Z^2$ which is. Note that neither CAT(0) groups or HHGs contain each
other.

However there is an easy way to turn a group theoretic property $\cal P$
into a commensurability invariant provided it is preserved by finite index
subgroups (which many properties are, including being CAT(0) and an HHG).
We merely alter it to being virtually $\cal P$, that is it has some finite
index subgroup with $\cal P$. We can thus ask instead: for finitely
generated groups, is being virtually $\cal P$ a quasi-isometric invariant?
For instance this question has been considered for the properties of being
virtually free, virtually cyclic, virtually abelian, virtually nilpotent,
virtually polycyclic and virtually soluble. None of these would be
commensurability invariants if virtually were removed, but the first four
are known to be quasi-isometry invariants by various deep results. The fifth
is unknown and the sixth is false for finitely generated groups but
unknown for finitely presented groups (see \cite{delah} IV.B.50 and the
references given there).

Therefore it is natural to ask in this context whether virtually being
an HHG is a quasi-isometry invariant. Note that virtually being a CAT(0)
group is not. This is demonstrated by the well known example of a group $G$
which is a central extension of $\Z$ by a closed surface group $S_g$
for $g\geq 2$ which
does not virtually split and so is not a CAT(0) group (see \cite{bh} II.7.26)
but which is quasi-isometric to $\Z\times S_g$. This group $G$ has all of
its finite index subgroups of the same form, so it is not virtually a
CAT(0) group either. We also note here that $\Z\times S_g$ acts geometrically
not just on a CAT(0) space but on a CAT(0) cube complex too, thus the
property $\cal P$ of acting geometrically on a CAT(0) cube complex is not a
quasi-isometric invariant and nor is being virtually $\cal P$ (note that
here virtually $\cal P$ is not equal to $\cal P$ by \cite{hag}).

However the group $G$, and more generally central extensions of $\Z$ by 
any non elementary hyperbolic group, was recently shown to
be a HHG in \cite{hrsspr} Corollary 4.3, so will not provide a counterexample
to virtually HHGs being invariant under quasi-isometry. Nor will the examples
which are not HHGs in \cite{petspr} Theorem 4.4 work because these are all
virtually $\Z^k$ for some $k$.

In this paper we show in Corollary \ref{hhglm} that there exist finitely
presented groups with no finite index subgroup being a HHG but
which are quasi-isometric to an HHG. Thus even adding virtually to the
property of being an HHG will not make it a quasi-isometric invariant.
The particular group used in Corollary \ref{hhglm} is the Leary - Minasyan group
first appearing in \cite{lrmn}. A Leary - Minasyan group
denotes any group formed by starting with $A=\Z^n$ for $n\geq 1$ which is
regarded as sitting naturally in $\Q^n$ and then taking
a matrix $M:\Q^n\rightarrow\Q^n$ which is in $GL(n,\Q)$ and a finite index
subgroup $B$ of $A \cap M^{-1}(A)$. The group is then defined to be the HNN
extension with base $A$ and associated subgroups
$B$ and $M(B)$ using the matrix $M$. We refer to THE Leary - Minasyan group
$L$ as the case where $n=2$, 
\[M=\sma{rr} \frac{3}{5}& -\frac{4}{5}\\  \frac{4}{5}&\frac{3}{5}\fma\]
and $B=A\cap M^{-1}(A)$ which is a CAT(0) group. It was shown in \cite{lrmn}
Theorem 1.1 that a Leary - Minasyan group is biautomatic if and only if it is
virtually biautomatic which occurs if and only if the matrix $M$ has finite
order. (It is unknown whether there exist virtually biautomatic groups
which are not biautomatic.) In particular the group $L$ is CAT(0)
but not (virtually) biautomatic: this was the first known example.
As for whether every HHG is biautomatic, this was answered negatively in
\cite{hghv}. In the introduction, it is asked whether any non-biautomatic
Leary - Minasyan group is an HHG, with the answer expected to be no.
In this paper we confirm this in Corollary \ref{nthhg}
by showing that a Leary - Minasyan group is virtually an HHG if and
only if the matrix $M$ has finite order.

It turns out that our arguments work in a somewhat wider class of groups
than Leary - Minasyan groups,
that of generalised Baumslag - Solitar
groups of arbitrary rank. A generalised Baumslag - Solitar group of
rank $n$, or $GBS_n$ group for short,
is a finite graph of groups where all vertex and edge groups
are isomorphic to $\Z^n$ (sometimes the term generalised Baumslag - Solitar
group refers only to the case $n=1$ but we will use the term in its wider
context). A $GBS_n$ group $G$ gives rise to a homomorphism from $G$ to
$GL(n,\Q)$, defined up to conjugacy,
which we call the modular homomorphism of $G$. (Strictly speaking
this should be the modular homomorphism of the decomposition of $G$ as a
$GBS_n$ group but in all but some basic cases this homomorphism is unique,
even if the decomposition is not.) In the case of
a Leary - Minasyan group the graph of groups has one vertex and one edge
with the modular homomorphism sending the stable letter to the matrix $M$
and all of the vertex subgroup $\Z^n$ to the identity.

We begin in Section 2 with some introductory material on how groups act
on hyperbolic spaces. In Section 3 we introduce generalised Baumslag - Solitar
groups $G$ of arbitrary rank $n$ and define the aforementioned modular
homomorphism. We use this to examine the free abelianisation of $G$, that
is the usual abelianisation $G/G'$ but with the torsion of $G/G'$
quotiented out. This gives us a dichotomy between groups with finite and
infinite monodromy, where the monodromy is the image of $G$ in $GL(n,\Q)$
under the modular homomorphism. Indeed it is shown in Theorem \ref{fnmn}
that a $GBS_n$ group with finite monodromy is virtually equal to
$\Z^n\times F_r$ for some finite rank free group $F_r$ (and hence in this
case $G$ is virtually a HHG), whereas a $GBS_n$ group with infinite
monodromy cannot be of this form. (As a consequence, whether a $GBS_n$
group has finite or infinite monodromy depends in all cases only on the group
and not the decomposition.)

In Subsection 3.3 we use this information to examine the possible actions
by isometries of a $GBS_n$ group $G$ on an arbitrary hyperbolic space. Whilst
a group of the form $\Z^n\times F_r$ will have many different actions on
hyperbolic spaces, we show in Theorem \ref{act}
that if $G$ has infinite monodromy then there is a non-trivial element of
the base $\Z^n$-subgroup which never acts loxodromically in any action
of $G$ on a hyperbolic space.

As for particular actions on hyperbolic spaces, the concept of an
acylindrical action of a group $G$ on an arbitrary hyperbolic space
$X$ was introduced in \cite{bow}. However if $X$ is bounded then any
action by isometries is acylindrical, so we require acylindrical actions
of $G$ with unbounded orbits in order to obtain anything useful. This
was developed in \cite{osac} to obtain the much studied concept of an
acylindrically hyperbolic group $G$, in which $G$ has an acylindrical
action on some hyperbolic space that is non - elementary, which
is equivalent to saying that the action has unbounded orbits and
$G$ is not virtually cyclic. In the
theory of HHGs, such a group comes equipped with various hyperbolic
spaces, including one on which $G$ acts by isometries called the
maximal domain $S$. It was shown in \cite{bhs1} Theorem 14.3 that if $G$ is an
HHG then the resulting action of $G$ on $S$ is acylindrical. However
it is perfectly possible that $S$ is a bounded metric space even if $G$
is an infinite HHG (for instance for groups acting geometrically on the
product of two trees).

Now it can be seen using standard results that $GBS_n$ groups are never
acylindrically hyperbolic but this does not allow us to conclude that
they are not HHGs as well, because it could be that in some potential
HHG structure on such a group, the maximal domain $S$ is bounded. However this
can be avoided by appealing to recent results appearing in \cite{petspr}
which show that if $G$ is any HHG then
there is a finite collection of unbounded domains, invariant under the
action of $G$ and which are pairwise orthogonal, such that any unbounded
domain in the HHG structure is nested in one of these. If $G$ is an infinite
group then this collection will be non empty. Moreover the action of $G$
on each of these domains can be combined to give an action of $G$ on their
product and this action is also acylindrical (as well as containing
a loxodromic element). Of course a product
of at least two unbounded hyperbolic spaces will not itself be hyperbolic
but the definition of an acylindrical action makes sense for an arbitrary
metric space. Whilst we do not know of any group theoretic consequences
if there exists an acylindrical action with a loxodromic element
on a product $P$ of hyperbolic spaces, in Section 4 we look at such actions
which also preserve the factors of $P$, which we call a product
acylindrical action.
We show in Theorem \ref{nop} that every element in such an action
must either be elliptic or loxodromic, as was shown to be the case
for a single hyperbolic space in \cite{bow} Lemma 2.2. 

This is then used to obtain Theorem \ref{pacg} which states that a $GBS_n$
group with infinite monodromy has no product acylindrical action. In
Section 5 we give some background on the properties of HHGs that we will
need, as opposed to giving the full definition. We then conclude in
Corollary \ref{nthhg} that a $GBS_n$ group is virtually an HHG if
and only if it has
finite monodromy, by virtue of the (non) existence of some product
acylindrical action. However having finite monodromy is not a quasi-isometric
invariant, as the Leary - Minasyan group shows.

In \cite{bebf} the property (QT) is introduced, which applies to finitely
generated groups. Such a group has (QT) if it acts isometrically on a
finite product of quasitrees such that the orbit map is a quasi-isometric
embedding. It is shown there that many groups do have (QT). We show in
Corollary \ref{qt}, once again using the Leary - Minasyan group $L$,
that (QT) is not preserved under quasi-isometry. For this we do not
need knowledge of possible product acylindrical actions of generalised
Baumslag- Solitar groups $G$. We only need to know which elements will be
elliptic in all actions of $G$ on a hyperbolic space.

The author would like to acknowledge helpful conversations with
Mark Hagen and Sam Hughes, as well as the referee for useful suggestions
that improved the exposition (including pointing out the reference
\cite{kou}).

\section{Actions on hyperbolic spaces}

In this paper we consider groups $G$ acting
by isometries on a hyperbolic space (or later a finite product of hyperbolic
spaces). Here a hyperbolic space $X$ will always mean that $X$ is
a geodesic metric space
satisfying any of the equivalent definitions of $\delta$ - hyperbolicity.
Note that no further
conditions such as properness of the space will be assumed. 
As $X$ is hyperbolic we can look at the
action of $G$ by homeomorphisms
on the (Gromov) boundary $\partial X$ of $X$ to obtain the limit
set $\partial_G X$ which is a subset of $\partial X$. This subset is
$G$-invariant and we have $\partial_H X\subseteq \partial_G X$ if $H$
is a subgroup of $G$.

We have the standard classification of the individual elements
$g\in G$ as follows:\\
\begin{defn} \label{cyc}
(i) The element $g$ is {\bf elliptic} under the given action if
the subgroup $\langle g\rangle$ has bounded orbits. This happens
if and only if $\partial_{\langle g\rangle} X=\emptyset$.\\
(ii) The element $g$ is {\bf loxodromic} under the given action if
the subgroup $\langle g\rangle$ embeds quasi-isometrically in $X$
under the (or an) orbit map, namely there is
$c>0$ (which exists independently of $x\in X$) such that for all $n\in\Z$
we have $d(g^n(x),x)\geq |n|c$. This occurs if and only if
$\partial_{\langle g\rangle} X$ consists
of exactly 2 points $\{g^\pm\}$ for any $x\in X$ 
and this is the fixed point set of $g$ on $\partial X$.\\
(iii) The element $g$ is {\bf parabolic} exactly when it is not elliptic
or loxodromic, which occurs if and only if
$\partial_{\langle g\rangle} X$ consists
of exactly 1 point and again this is the fixed point set of $g$
on $\partial X$.
\end{defn}
Note that an element $g\in G$ is elliptic/loxodromic/parabolic if
and only if $g^n$ is (for some/all $n\in\Z\setminus \{0\}$) and if and
only if some conjugate of $g$ is.

Moving back now to arbitrary groups, if $G$ 
acts by isometries on an arbitrary hyperbolic space $X$ then we have the
Gromov classification dividing possible actions into five very different
classes (for these facts and related references, see 
\cite{abos} and \cite{ren}):\\
\hfill\\
(1) The action has {\bf bounded orbits}.\\
(2) The action is {\bf parabolic} (or horocyclic),
meaning that $\partial_G X$ has exactly
one point $p$. In this case the action can never be cobounded.\\
(3) The action is {\bf lineal}, meaning that
$\partial_G X=\{p,q\}$ has exactly 2 points (which in general can be swapped
or fixed pointwise by the action of $G$).
In this case there
will exist some loxodromic element in $G$ with limit set $\{p,q\}$.\\
(4) The action is {\bf quasi-parabolic} (or focal).
This says that the limit set
has at least 3 points, so is infinite, but there is some point
$p\in\partial_G X$ which is globally fixed by $G$. This implies that
$G$ contains a pair of loxodromic elements with limit sets
$\{p,q\}$ and $\{p,r\}$
for $p,q,r$ distinct points.\\
(5) The action is {\bf general}: the limit set is infinite and we have two
loxodromic elements with disjoint limit sets.

We will be interested in the question:
given a specific group $G$ and a particular element $g\in G$, can we find
an action of $G$ on some hyperbolic space $X$ where $g$ acts loxodromically?
If we first consider obstructions, the most obvious is if $g$ has finite
order or if $G$ is finitely generated and $g\in G$ is distorted in $G$.
As for sufficient conditions, if we have a homomorphism
$\theta: G\rightarrow \R$ with $\theta(g)\neq 0$ then we can realise
$\theta$ as an action of $G$ on $\R$ by translations in which $g$ acts
loxodromically. Indeed this also works for a homomorphism
$\theta:G\rightarrow Isom(\R)\cong\R\rtimes C_2$ if $\theta(g)$ has infinite
order, as here the elements swapping the ends of $\R$ will have order 2.
However, much more generally, this also works by \cite{abos} Proposition 4.9
if we have a homogeneous quasi-morphism $q:G\rightarrow\R$ with $q(g)\neq 0$,
whereupon $X$ is a quasiline.

We also have a type of converse which we will use later:
given an action of a locally compact group $G$ on a hyperbolic space $X$
where there is a
point $p\in\partial X$ on the boundary fixed by all of $G$, there exist
{\bf Busemann functions} on $G$. Here all groups with such
an action that we consider are countable and
discrete, hence locally compact.
In particular, see \cite{ccmt} where
it is shown that there exists a homogeneous quasi-morphism
$q:G\rightarrow \R$, the Busemann quasicharacter,
where the quasi-kernel $\{g\in G\,|\,q(g)=0\}$
consists exactly of those elements of $G$ which are not acting loxodromically
on $X$. In particular, although this results in $q$ being trivial in actions
of the first or second type, in the third (assuming the two limit points
are fixed pointwise) or the fourth types of actions we do obtain a non-trivial
homogeneous quasi-morphism on $G$. Moreover \cite{ccmt} also shows that
this Busemann quasicharacter is a genuine homomorphism if either $G$ is
amenable or $X$ is proper. Indeed the first point follows from the well
known fact that the only homogeneous quasi-morphisms on an amenable group
are the homomorphisms.

\section[Generalised
Baumslag - Solitar groups]{Generalised rank $n$ Baumslag - Solitar
  groups}

Let $G$ be the fundamental group of a finite graph of groups where all
edge and vertex groups are isomorphic to $\Z$. The classical
Baumslag - Solitar groups are obtained when the underlying
graph has a single vertex and one loop. For an arbitrary finite
graph, these groups go back at least to \cite{krpp} and at some later 
point they became known as generalised Baumslag - Solitar groups. 
However one can replace $\Z$ with $\Z^n$ for a fixed integer $n\geq 1$,
which are also sometimes known as generalised Baumslag - Solitar groups.
Therefore for clarity we introduce the following definition.
\begin{defn}
A {\bf     generalised rank $n$ Baumslag - Solitar group} ($GBS_n$ group)
$G$ is the fundamental group of a finite graph $\Gamma$
of groups where all vertex and
edge groups are isomorphic to $\Z^n$.
\end{defn}
This notation occurs in \cite{beekjgt}, where also the term $vGBS$ group is
used (with $v$ standing for variable) for when the rank of the various
vertex and edge groups $\Z^k$ need not be constant. However $vGBS$ groups
need not enjoy the finite valence property mentioned below and so we will not
consider those groups further.

\subsection{The modular homomorphism}

We can obtain a finite presentation for a $GBS_n$ group $G$ by taking each
vertex $v_i\in \Gamma$ and corresponding vertex group $V_i$ and fixing a free
basis $a_{i,1},\ldots ,a_{i,n}$ for $V_i$. We take a maximal tree $T_0$
in the finite graph $\Gamma$ defining $G$ and form the amalgamation of the
$V_i$s over the finite index edge subgroups. We then add a stable letter
$t_j$ to the generators for each of the $r$ (say) edges
$e_j$ of $\Gamma\setminus\{T_0\}$ and the corresponding relations where $t_j$
conjugates the inclusion of the edge group $E_j$ at one end of $e_j$ to the
inclusion of $E_j$ at the other end. Note this means that $G=N\rtimes F_r$
where $N$ is given by the normal closure of the vertex groups $V_i$ in $G$
and $F_r=\langle t_1,\ldots ,t_r\rangle$.

For $n=1$ a modular homomorphism from $G$ to $\Q^\times$ was considered in
\cite{lev}. This was generalised to arbitrary $n$ in \cite{dcvl} where
the homomorphism is now from $G$ to $\R^n\rtimes GL(n,\R)$. We present our
own more basic version here for arbitrary $n$ which has the advantage that
it works for the finite index subgroups of $G$ as well. It is equivalent
to the modular representation in \cite{cl} Section 3 but is a more concrete
version, making it easier to calculate in specific examples.

As a $GBS_n$ group is formed from a graph of groups, it
acts (by automorphisms without inverting edges) coboundedly
on a simplicial tree $T$. The crucial property here is that
this tree has finite valence because a subgroup of $\Z^n$ which is itself
isomorphic to $\Z^n$ must be a finite index subgroup. Now
given any $GBS_n$ group $G$, we first make arbitrary choices of a base
vertex $v_0$ in the tree $T$ on which $G$ acts, a finite index
subgroup $A\cong\Z^n$ of the stabiliser $Stab_G(v_0)\cong\Z^n$ and an ordered
basis $a_1,\ldots ,a_n$ of $A$. We refer to $A$ as our {\bf base
$\Z^n$-subgroup} for $G$. But
given any element $g\in G$, we have that
$A$ and $g^{-1}Ag$ are commensurable subgroups of $G$, that is their
intersection has finite index in both. This is because $g^{-1}Ag\leq_f
Stab_G(g^{-1}(v_0))=g^{-1}Stab(v_0)g$ and the finite valence of the tree $T$
implies that any two vertex stabilisers are commensurable so this holds
for $Stab_G(v_0)$ and $g^{-1}Stab(v_0)g$, hence also for $A$ and $g^{-1}Ag$.

Our modular homomorphism $\cal M$ which we now define will be
a homomorphism from $G$ to $GL(n,\Q)$ and it is easily checked that
changing any of these choices results in a homomorphism that is
conjugate in $GL(n,\Q)$ to $\cal M$. We first note that
for any finite index subgroup $H$ of $\Z^n$, there is $m\in\N$
(depending on $H$) such that $m\Z^n\leq_f H\leq_f \Z^n$ (here using additive
notation).
\begin{defn}
Let $G$ be a $GBS_n$ group for $n\geq 1$ with some base subgroup $A\cong\Z^n$. 
We define the {\bf modular homomorphism} ${\cal M}: G \rightarrow GL(n,\Q)$
in the following way. For any $g\in G$, we have just remarked that
(taking $\Z^n$ to be our base subgroup $A$ and $H$ to be $A\cap g^{-1}Ag$)
there is $m>0$ such that
\[mA=\langle a_1^m,\ldots ,a_n^m\rangle\leq_f A\cap g^{-1}Ag\leq_f A.\]

In particular for any $a=a_1^{l_1}\ldots a_n^{l_n}\in A$ with
$l_1,\ldots ,l_n\in \Z$, we have that $a^m\in g^{-1}Ag$ and so the element
$ga^mg^{-1}$ of $G$ is actually in $A$. This means that on taking $a$ to be
each of our basis elements $a_1,\ldots ,a_n$ in turn, we have uniquely
defined integer coefficients $g_{ij}$ with
\[ga_j^mg^{-1}=a_1^{g_{1j}}\ldots a_n^{g_{nj}}\]
and our definition of the modular map $\cal M$ is that it
sends $g$ to the matrix whose $i,j$th entry is
$g_{ij}/m\in \Q$.

The {\bf monodromy} of a $GBS_n$ group $G$ is the image ${\cal M}(G)$.
\end{defn}
Note that this definition of ${\cal M}(g)$
is independent of the value of $m$ taken
as if we replace $m$ with $m'$ for any appropriate $m'>0$ then
$(ga^mg^{-1})^{m'}$ is equal to $(ga^{m'}g^{-1})^m$. Moreover for any $g,h\in G$
we have ${\cal M}(g){\cal M}(h)={\cal M}(gh)$ so that $\cal M$ maps to
$GL(n,\Q)$ and is a homomorphism. Clearly the subgroup $A$ is in the
kernel of $\cal M$. We also note here that any
element $g$ of $G$ acting elliptically on the tree $T$ is sent to the
identity by $\cal M$ because $g$ lies in some vertex stabiliser
$Stab(v)\cong\Z^n$ and by finite valence of $T$ there will be a finite
index subgroup $B\leq_f A$ with $B\leq Stab(v)$, whereupon $gbg^{-1}=b$
for all $b\in B$. Thus $\cal M$ factors through the decomposition
of $G$ into $N\rtimes F_r$ above and so can also been thought of as a
homomorphism from $F_r$ to $GL(n,\Q)$. 
In particular if $r=0$ so that the underlying graph $\Gamma$ is actually
a finite tree then $\cal M$ is the trivial homomorphism. This can also
happen for $r>0$, for instance the HNN extension
$\langle a,b,t\,|\,[a,b],tat^{-1}=a,tbt^{-1}=b\rangle $ when $n=2$.

Given our modular homomorphism ${\cal M}:G\rightarrow GL(n,\Q)$, we can
restrict ${\mathcal M}$
to a subgroup $H$ of $G$. If $H\leq_f G$ then $H$ is also
a $GBS_n$ group because the restriction of the action of $G$ on the tree $T$
to $H$ is also cobounded, with edge and vertex subgroups which are finite
index subgroups of $\Z^n$, hence are all isomorphic to $\Z^n$ too. This
description of $H$ gives rise to its own modular homomorphism ${\cal M}_H$
but we can regard this, up to conjugacy, as the restriction of the modular
homomorphism $\cal M$ for $G$. This is because as $G$ and $H$ are acting
on the same tree $T$, we can first take the same base vertex $v_0$ in $T$.
Then since $H\leq_f G$, we have that $H\cap Stab_G(v_0)\leq_f Stab_G(v_0)$.
Thus we can choose our subgroup $A$ to be $H\cap Stab_G(v_0)$ both when
defining $\cal M$ and ${\cal M}_H$ and we also choose the same ordered basis 
for $A$ in both cases. We are now in the position that the definition of
${\cal M}_H$ is exactly the definition of $\cal M$ but just for elements
$h\in H$.

\subsection{The free abelianisation}

If $G$ is a finitely generated group and $G'$ is its commutator subgroup
then $G/G'$ is the abelianisation of $G$. It is a finitely generated
abelian group and so is of the form $\Z^k\oplus T$ where the torsion
subgroup $T$ is finite. Moreover every abelian quotient
of $G$ factors through $G/G'$.

Here we will consider the {\bf free abelianisation} $\overline{G}$
where we further quotient out by the torsion in the abelianisation
to obtain $\overline{G}=\Z^k$ for some $k$. This has the corresponding
universal property that any homomorphism from $G$ to a torsion free
abelian group factors through $\overline{G}$.

If we are given a finite presentation for $G$ with $m$ generators
then it is easy to
calculate $G/G'$ and $\overline{G}$ by abelianising these relations and
considering them as defining a subgroup $S$ of $\Z^m$ so that $G/G'$
is the quotient abelian group $\Z^m/S$. In fact the process is even easier
for $\overline{G}$ because of the lack of torsion: we can work over $\Q$
to get that the rank $k$ of $\overline{G}$ is the dimension of the
quotient space $\Q^m/R$, where $\Q^m$ is the vector space spanned by the
given generators for $G$ and $R$ is the subspace spanned by the relators
for $G$, once these relators have been abelianised and regarded as 
elements of $\Q^m$. This also says that an element $g\in G$ has infinite
order (equivalently is non trivial) in $\overline{G}$ if on expressing $g$
as a word in the generators and abelianising this word, the corresponding
$\Q^m$-vector is not in the subspace $R$.

This process works out especially well for a $GBS_n$ group $G$. As before,
we take our base vertex $v_0$, finite index subgroup $A$ of $Stab_G(v_0)$
and basis $a_1,\ldots ,a_n$ for $A$. On considering the 
$\Q$-vector space $W$ of dimension $n$
spanned by this basis, let us consider how this relates
to forming the group $G$ as the fundamental group of a finite graph of
$\Z^n$ groups and how it also relates to $\overline{G}$. Each time we
introduce a new vertex group $V_i$ and form its amalgamation with the
previous vertex groups over the appropriate edge group, we are giving an
identification of the $\Q$-vector space spanned by $V_i$ with our original
vector space $W$. Thus if there are no stable letters then
$\overline{G}=\Z^n$. However on taking
one of the stable letters $t_j$ with its edge running from the vertex
$v_j$ to the vertex $v_{j'}$, this introduces $n$ new relations in $G$
of the form
\[t_jx_1^{l_1}\ldots x_n^{l_n}t_j^{-1}=y_1^{m_1}\ldots y_n^{m_n}\]
where $x_1,\ldots ,x_n$ is a basis for
the vertex group $V_j$ and $y_1,\ldots ,y_n$ a basis for $V_{j'}$. Thus
in $\overline{G}$ we obtain the abelianised relation
$l_1x_1+\ldots +l_nx_n
=m_1y_1+\ldots +m_ny_n$, so that the corresponding relator can be expressed
using our identifications
above as an element of $W$ which is trivial
in $\overline{G}$. Thus the span of these $rn$ relators forms a subspace $R$
of $W\cong\Q^n$ and our free abelianisation $\overline{G}$ can be described
over $\Q$ as the quotient vector space $\Q^r\oplus (W/R)$, where the first
summand comes from the stable letters. Note that we can again work out
easily whether an element $g$ of $G$ is non trivial in $\overline{G}$ by
writing $g$ in terms of the generators obtained from the graph of groups
decomposition and abelianising. Indeed $g$ will be trivial
in $\overline{G}$ if and only if
each stable letter appears in $g$ with exponent sum 0 and such that
the resulting abelianisation of the word representing $g$, which will now lie
in $W$, also lies in $R$. Thus in particular we have from this discussion:
\begin{thm} \label{fab}
Suppose that $G$ is a $GBS_n$ group with base $\Z^n$-subgroup $A$.
Then $A$ embeds in
the free abelianisation $\overline{G}$ if and only if $R=\{0\}$. Moreover
this happens
if and only if the the monodromy ${\cal M}(G)$ is trivial
because the modular homomorphism is defined by what it does on the stable
letters.
\end{thm}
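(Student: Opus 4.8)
The plan is to read off the first equivalence from the explicit description of $\overline{G}$ obtained in the paragraph just before the statement, and then to reduce the second equivalence to an elementary matrix computation at each stable letter.

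For ``$A$ embeds in $\overline{G}$ if and only if $R=\{0\}$'', I would use the criterion already established: since an element $a=a_1^{l_1}\cdots a_n^{l_n}$ of $A$ involves no stable letters, it is trivial in $\overline{G}$ precisely when its abelianisation $v_a=l_1a_1+\cdots+l_na_n\in W$ lies in $R$. If $R=\{0\}$ this forces $v_a=0$, hence $l_1=\cdots=l_n=0$ as $a_1,\ldots,a_n$ is a basis of $A$, so the natural map $A\to\overline{G}$ is injective and $A$ embeds. Conversely, if $R\neq\{0\}$, choose a nonzero $v\in R$ and clear denominators by some $N\in\N$, so that $Nv$ is a nonzero element of $R$ lying in the integer lattice $A=\Z a_1\oplus\cdots\oplus\Z a_n$ of $W$; the corresponding element of $A$ is nontrivial but dies in $\overline{G}$, so $A$ does not embed.

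For ``$R=\{0\}$ if and only if ${\cal M}(G)$ is trivial'', I would first recall that, as shown above, $\cal M$ factors through the quotient $G\to F_r$, so ${\cal M}(G)$ is generated by ${\cal M}(t_1),\ldots,{\cal M}(t_r)$ and is trivial exactly when ${\cal M}(t_j)=I$ for every $j$. On the other side, the vectors spanning $R$ are precisely the abelianised relators contributed by the $r$ stable letters (the edges of the maximal tree $T_0$ only fix the identifications $V_i\otimes\Q\cong W$ and contribute no relators), so $R=\{0\}$ if and only if, for each $j$, every abelianised relator coming from $t_j$ vanishes in $W$. It thus suffices to treat one stable letter $t=t_j$ attached along an edge from $v_i$ to $v_{i'}$ with edge group $E$: letting $P$ and $Q$ be the matrices over $\Q$ of the two inclusions $E\hookrightarrow V_i$ and $E\hookrightarrow V_{i'}$ read in the basis of $W$, the abelianised relators from $t$ span the column space of $P-Q$, while unravelling the definition of $\cal M$ (take a power of $a_k$ lying in $E$, conjugate by $t$, divide the exponents back) gives ${\cal M}(t)=QP^{-1}$ up to the conjugacy caused by moving the base vertex along $T_0$. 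Both conditions therefore say $P=Q$, and running over all $j$ gives $R=\{0\}$ if and only if ${\cal M}(G)=\{I\}$.

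The step I expect to be the main obstacle is this last reduction to the HNN computation: the modular homomorphism is computed with respect to the base subgroup $A\leq_f Stab_G(v_0)$, but the relators from $t$ are local to its edge, so one must verify carefully that transporting everything into $W$ via the amalgamation structure on $T_0$ turns the general case into the clean matrix statement above — in particular that the elliptic path elements connecting $v_0$ to that edge contribute nothing, being in the kernel of $\cal M$. After that the verification is just linear algebra with the integer matrices $P,Q$.
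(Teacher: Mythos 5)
Your proposal is correct and follows essentially the same route as the paper, which states Theorem \ref{fab} as a direct consequence of the preceding description of $\overline{G}$ as $\Q^r\oplus(W/R)$ with $R$ spanned by the abelianised stable-letter relators. Your clearing-denominators step for the converse and the explicit $P,Q$ matrix computation identifying ${\cal M}(t)=QP^{-1}$ with the relators spanning the column space of $P-Q$ just make explicit what the paper leaves as ``because the modular homomorphism is defined by what it does on the stable letters.''
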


Note: from this, we see that the set of elements in $A$ which are trivial
(equivalently have finite order) in the free abelianisation of $G$ form
a subgroup of $A$. This is because these are the elements of $A$ which,
when considered as elements of $\Q^n$, lie in the subspace $R$ of $W$.

We can now present a dichotomy in the behaviour of $GBS_n$ groups (which
for $n=1$ is shown in \cite{lev} Proposition 2.6 and for general $n$ was stated
in \cite{cl} Section 3).
\begin{thm} \label{fnmn}
If $G$ is any $GBS_n$ group with finite monodromy then $G$
is virtually $\Z^n\times F_r$ for some $r\geq 0$.
\end{thm}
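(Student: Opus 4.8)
The plan is to exploit the structure $G = N \rtimes F_r$ together with the finite monodromy hypothesis to produce a finite-index subgroup that splits off a copy of $\Z^n$. Since ${\cal M}(G)$ is finite and $\cal M$ factors through $F_r$, the subgroup $F_0 := \ker({\cal M}|_{F_r})$ is a finite-index (normal) subgroup of $F_r$, hence itself free of some finite rank, say $F_0 = F_s$. Set $G_0 := N \rtimes F_0 \leq_f G$. I claim $G_0$ acts on the same Bass--Serre tree $T$ and is again a $GBS_n$ group (this is already noted in the excerpt for finite-index subgroups), and now its monodromy is trivial. So it suffices to prove the theorem under the assumption that the monodromy is trivial, and then pass back up: $G_0 \leq_f G$ and $G_0$ virtually $\Z^n \times F_{r'}$ gives $G$ virtually $\Z^n \times F_{r'}$.

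Next I would analyse a $GBS_n$ group $H$ with trivial monodromy. Fix the base vertex $v_0$, base subgroup $A = Stab_H(v_0) \cong \Z^n$ (shrinking if necessary), and recall that trivial monodromy means every stable letter $t_j$ conjugates $A$ (a finite-index copy of itself) to $A$ by an element of $GL(n,\Q)$ that is in fact the identity matrix. Concretely, for a suitable common power $m$ we have $t_j a_k^m t_j^{-1} = a_k^m$ for all $j,k$, so after passing to the finite-index subgroup $A^{(m)} := \langle a_1^m,\dots,a_n^m\rangle$ we get that $A^{(m)}$ is \emph{centralised} by all the stable letters. The normal closure $N$ of the vertex groups: here I need that $N$ is itself ``controlled'' by $A$. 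Since $H$ acts coboundedly on $T$ with all stabilisers commensurable copies of $\Z^n$ and trivial monodromy, $N$ should be virtually $\Z^n$ — more precisely, I would argue that the intersection of all vertex stabilisers along an orbit, suitably powered, is a finite-index central-in-$N$ abelian subgroup, and that modding out this $\Z^n$ collapses the graph of groups to a graph of trivial groups, i.e. $H/(\text{that }\Z^n)$ is virtually free. This is the technical heart.

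The cleanest route to make that precise: let $Z \leq A$ be the finite-index subgroup on which every stable letter and (after a further finite-index pass) every vertex group acts trivially by conjugation — such a $Z$ exists by trivial monodromy plus commensurability of stabilisers plus finiteness of $\Gamma$. Then $Z$ is a central, finite-index subgroup of $N$, and I would check that $Z$ is normal in a finite-index subgroup $H_1 \leq_f H$ with $Z \leq Z(H_1)$. Now $H_1/Z$ is a $GBS_n$-type group whose vertex and edge groups are finite (they are $\Z^n / Z \cong$ finite), so $H_1/Z$ is virtually free, say it has a finite-index free subgroup $\bar F$; pulling back gives $H_2 \leq_f H_1$ with $Z \leq Z(H_2)$ and $H_2/Z$ free. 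A central extension of a free group splits, so $H_2 \cong Z \times (H_2/Z) \cong \Z^n \times F_{r'}$. Tracking finite indices back through $H_2 \leq_f H_1 \leq_f H = G_0 \leq_f G$ completes the argument.

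The main obstacle I anticipate is establishing the existence of the single finite-index subgroup $Z \leq A$ that is simultaneously centralised by \emph{all} the generators (stable letters and enough of each vertex group) of a fixed finite-index subgroup of $G$ — i.e., reconciling the per-generator ``trivial on a finite-index subgroup'' statements into one uniform finite-index subgroup, and arranging that this $Z$ is genuinely normal (indeed central) in a finite-index subgroup, not merely normalised by the chosen generating set. This requires care because conjugation by an edge group can move $A$ off itself; the device of replacing $A$ by a deep congruence subgroup $mA$ (using that $\cal M$ and all vertex-group actions land in a common finite subgroup of $GL(n,\Q)$, which fixes a finite-index sublattice) is what resolves it. Once $Z$ is in hand and central in $H_2$, the splitting of central extensions of free groups is standard and the rest is bookkeeping.
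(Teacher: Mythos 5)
Your proposal is correct and follows essentially the same route as the paper: pass to a finite-index subgroup with trivial monodromy, exhibit a finite-index subgroup of the base $\Z^n$ that is central there, quotient by it to get a virtually free group, and split the resulting central extension over a free quotient. The differences are cosmetic --- the paper sidesteps your ``deep congruence subgroup'' worry by taking $B$ to be the intersection of all vertex and edge groups and using uniqueness of $M$th roots in $\Z^n$ to get $tbt^{-1}=b$ on the nose, and your passing aside that $N$ should be virtually $\Z^n$ is false in general (e.g.\ for $\langle a,b,t\mid [a,b],\ ta^2t^{-1}=a^2,\ tb^2t^{-1}=b^2\rangle$ the quotient $N/B$ is an infinite free product of copies of $C_2\times C_2$), but this claim is never used in the argument you actually run.
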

\begin{proof}
First drop down to a finite index subgroup $H$ of $G$ where $H$ has trivial
monodromy, for instance we could take $H$ to be the kernel of the
modular homomorphism ${\cal M}$.
As $H$ is the fundamental group of a finite graph of groups where
all of the finitely many edge and vertex groups are commensurable, we can
intersect them to get a subgroup $B$ of $H$ which can be used as a base
$\Z^n$-subgroup $H$. Now let us consider the presentation we obtain for $H$
from this graph of groups decomposition. Our finite generating set consists
of generators $g_i$ of the vertex groups along with the stable letters $t_j$.
Now any element $b\in B$ will also lie in any vertex group and so will commute
with every element $g_i$. On taking a stable letter $t$ which is obtained from
the edge joining the vertices $v_1$ and $v_2$ (possibly the same vertex) with
vertex groups $V_1,V_2$ say and respective edge inclusions $E_1\leq V_1$ and
$E_2\leq V_2$, we have that $tE_1t^{-1}=E_2$.

But as the monodromy is trivial, there is $M>0$
(depending on $b\in B$) such that
$b^M\in B\cap t^{-1}Bt$ and $tb^Mt^{-1}=b^M$. However $B$ lies in every edge and
vertex group, so that $b\in E_1$ and hence $tbt^{-1}\in E_2\cong \Z^n$. Thus
$tbt^{-1}$ must be an element in $E_2$ such that $(tbt^{-1})^M=b^M\in E_2$.
Clearly the element $b$ has this property as $B\leq E_2$ as well. Moreover
$M$th roots are unique in $\Z^n$, thus $tbt^{-1}=b$.

Hence we conclude that $B$ is normal and indeed central in $H$. As it lies in
(and is normal in) every vertex and edge group, we can consider $H/B$. This
group itself admits a graph of groups decomposition with the same underlying
finite graph, but with vertex groups $V_i/B$ and edge groups $E_j/B$. These are
all finite groups so $H/B$ is virtually free. Hence we can pull back a
finite index free
subgroup of $H/B$ to obtain a subgroup $L\leq_fH$ with $L/B\cong F_r$. But
as this quotient is free the extension splits, so there is a copy of $F_r$
in $L$ with $L=B\rtimes F_r$. As $B$ is central this is simply a direct
product, so that $G$ has the finite index subgroup $L\cong\Z^n\times F_r$.
\end{proof}

\subsection{Actions of $GBS_n$ groups on arbitrary hyperbolic spaces}
We can now use the above to see how a given $GBS_n$ group $G$ and its finite
index subgroups $H$ act on hyperbolic spaces. Certainly we have the action
of $G$ on its Bass - Serre tree where all elements of
the base $\Z^n$-subgroup $A$ act elliptically. However the
point is that if some element $a$ of $A$ is loxodromic when
$G$ acts on a hyperbolic space, the fact that the base subgroup
$A$ is commensurated in all of $G$ means
that the action must be very restricted.

\begin{thm} \label{act}
Suppose the monodromy of a $GBS_n$ group $G$ with given base $\Z^n$-subgroup
$A$ has infinite order. Then there is a non-trivial element $z\in A$
such that for any isometric action
of $G$ on a hyperbolic space, the element $z$ does not act
loxodromically.
\end{thm}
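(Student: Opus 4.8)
The plan is to exploit the fact that the base $\Z^n$-subgroup $A$ is commensurated in $G$ together with the infinitude of the monodromy to produce an element of $A$ that is forced to be elliptic. The strategy is to argue by contradiction: suppose that for every non-trivial $z\in A$ there is \emph{some} isometric action of $G$ on a hyperbolic space in which $z$ is loxodromic. One should first reduce to a single potential offending element. By Theorem \ref{fab} and the Note following it, the set of elements of $A$ which die in the free abelianisation of $G$ is a subgroup $A_0$ of $A$, and since the monodromy ${\cal M}(G)$ is infinite (hence non-trivial) we have $R\neq\{0\}$, so $A_0$ is non-trivial. I would take $z$ to be a suitable non-trivial element of $A_0$ — the claim being that no element of $A_0$ can act loxodromically.

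First I would recall the key rigidity input for commensurated subgroups acting on hyperbolic spaces: if $G$ acts by isometries on a hyperbolic space $X$ and a commensurated subgroup $C\leq G$ contains a loxodromic element, then the limit set $\partial_C X$ is $G$-invariant (because $g\partial_C X=\partial_{gCg^{-1}}X=\partial_{C}X$ up to the finite-index ambiguity, which does not change the limit set), and $\partial_C X$ has $1$ or $2$ points by the classification — it cannot be infinite, since an infinite limit set for $C$ forces (via quasi-parabolic or general type) a non-abelian free subgroup or pair of independent loxodromics inside $C\cong\Z^n$, impossible. Hence if some $a\in A$ is loxodromic, the $G$-action restricted to $\langle A\rangle$, and then to all of $G$ since $\partial_A X$ is $G$-invariant and contains $\partial_A X$ as $\partial_G X$, is lineal or quasi-parabolic (the point $p=a^+$ or the pair $\{a^\pm\}$ is $G$-invariant setwise), and in particular $\partial_G X$ is finite with a $G$-fixed point or $G$-invariant pair. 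Passing to the index-$\leq 2$ subgroup $G_1$ fixing both boundary points pointwise, the Busemann machinery of \cite{ccmt} recalled in Section 2 yields a homogeneous quasi-morphism $q:G_1\to\R$ whose quasi-kernel is exactly the non-loxodromic elements; so $q(a)\neq 0$ if $a$ is loxodromic.

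The heart of the argument is then to show $q(a)=0$ for every $a\in A_0$, giving the contradiction. Here I would use that $A$ is commensurated together with the structure of $q$: for $g\in G_1$, the element $ga^mg^{-1}$ lies in $A$ for suitable $m$, and by the defining property of the modular homomorphism $ga^mg^{-1}=\prod_i a_i^{({\cal M}(g)\cdot(m l))_i}$ where $a=\prod a_i^{l_i}$. Since $q$ is a homogeneous quasi-morphism it is conjugation-invariant, so $q(a^m)=q(ga^mg^{-1})$, i.e. $m\,q(a)=q\!\left(\prod_i a_i^{({\cal M}(g)(ml))_i}\right)$. The restriction of $q$ to the abelian group $A$ is an honest homomorphism $A\to\R$ (homogeneous quasi-morphisms on abelian groups are homomorphisms), say given by a linear functional $\phi$ on $W\cong\Q^n$; thus $\phi(l)=\phi({\cal M}(g)l)$ for all $g\in G_1$ and all $l$, i.e. $\phi$ is fixed by the transpose action of the monodromy ${\cal M}(G_1)$, which is still infinite (finite index in ${\cal M}(G)$). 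Now I would combine this with the free-abelianisation picture: an element $a=\prod a_i^{l_i}\in A$ satisfies $q(a)=\phi(l)$, and the subspace $R$ is precisely spanned by vectors of the form $l-{\cal M}(t_j)l$-type differences coming from the stable letters — so $R$ lies in the span of $\{v-{\cal M}(g)v\}$. Since $\phi$ is ${\cal M}$-invariant, $\phi$ annihilates every such difference, hence $\phi|_R=0$, hence $q(a)=\phi(l)=0$ for every $a\in A_0$ (those $a$ with $l\in R$). This contradicts loxodromicity of $a$, completing the proof with $z$ any non-trivial element of $A_0$.

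The main obstacle I anticipate is the first step — rigorously ruling out infinite limit sets for the commensurated abelian subgroup $A$ and correctly tracking the finite-index ambiguity in $g^{-1}Ag\cap A$ so that the $G$-invariance of $\partial_A X$ is genuine, and then handling the index-$2$ subtlety when the lineal action swaps the two boundary points (where the Busemann quasicharacter may vanish on $G$ but not on $G_1$; one must check $a\in A\leq G_1$, which holds because elements of $A$ fix both $a^\pm$). Once the action is pinned down to lineal/quasi-parabolic type and the Busemann homomorphism-on-$A$ is in hand, the remaining computation is just linear algebra with the modular homomorphism and is routine given Theorem \ref{fab}.
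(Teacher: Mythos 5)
Your overall strategy is the same as the paper's: commensuration of $A$ forces any action in which some $a\in A$ is loxodromic to preserve a pair of boundary points, the Busemann quasicharacter then gives a non-trivial homomorphism on $A$ which is invariant under the conjugation (i.e.\ modular) action, and this is played off against the subspace $R$ of Theorem \ref{fab}. However there is a genuine gap precisely at the index-$2$ subtlety you flag at the end, and your proposed resolution (``one must check $a\in A\leq G_1$'') addresses the wrong issue. The invariance $\phi=\phi\circ{\cal M}(g)$ that you derive from conjugation-invariance of the homogeneous quasi-morphism $q$ holds only for $g$ in the index-$\leq 2$ subgroup $G_1$ on which $q$ is defined. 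But the subspace $R$, and hence your subgroup $A_0$, is spanned by the vectors $v-{\cal M}(t_j)v$ attached to the stable letters $t_j$ of $G$, and when the lineal action swaps the two boundary points some $t_j$ may lie outside $G_1$. For such a $t_j$, conjugation by $t_j$ interchanges $p^+$ and $p^-$, and the Busemann quasicharacters at the two fixed points are negatives of each other on $G_1$, so one gets $\phi\circ{\cal M}(t_j)=-\phi$ rather than $\phi$; then $\phi\bigl(v-{\cal M}(t_j)v\bigr)=2\phi(v)$, which need not vanish. Hence $\phi|_R=0$ does not follow, and your chosen $z$ (non-trivial in $A$ but trivial in the free abelianisation of $G$) is not shown to be non-loxodromic. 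Note also that triviality in $\overline{G}$ does not imply triviality in $\overline{G_1}$ for an index-$2$ subgroup $G_1$, so the choice of $z$ itself is the problem, not just the bookkeeping.

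The paper closes exactly this gap by choosing $z$ more carefully: it sets $G_2$ to be the intersection of all index-$2$ subgroups of $G$, observes that $G_2$ is again a $GBS_n$ group with infinite monodromy, and applies Theorem \ref{fab} to $G_2$ to produce $z\in A\cap G_2$ trivial in the free abelianisation of $G_2$. Whatever index-$\leq 2$ subgroup $G_0$ arises from a given action, it contains $G_2$, and the homomorphism $\Theta:G_0\to\R$ (obtained by lifting $\theta=q|_{G_0\cap A}$ across a graph-of-groups decomposition of $G_0$ whose stable letters all lie in the domain of $q$) restricts to $G_2$, factors through $\overline{G_2}$, and therefore kills $z$. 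To repair your argument you would need to run your linear-algebra step inside a decomposition of $G_2$ (or of $G_1$) rather than of $G$; with that modification your proof becomes essentially the paper's.
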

\begin{proof}
Consider any isometric action of $G$ on some hyperbolic space $X$ and
suppose that there is some element $a\in A$ which is acting loxodromically.  
We have the two limit points $p^\pm\in\partial X$ for the action of
$\langle a\rangle$ and
these are the only points in $\partial X$ fixed by any non-trivial
power of $a$. Now for any $g\in G$,
we have that $A$ and $g^{-1}Ag$ are commensurable subgroups.
As $a\in A$, we can find $j>0$ such that $a^j$ is in both of these
subgroups. In particular $ga^jg^{-1}\in A$ and it acts loxodromically
on $X$ because it is a conjugate in $G$ of the element $a^j$. But as
$A$ is abelian, this means $ga^jg^{-1}$ sends the fixed point set of
$a^j$ to itself and so swaps or fixes the two points $p^+$ and $p^-$.
However the case where $p^+$ and $p^-$ are swapped cannot occur: if so
then the loxodromic element $ga^jg^{-1}$ would have two other fixed
points $q^+,q^-$ say on $\partial X$. But then
the square of $ga^jg^{-1}$, which is also loxodromic,
would fix at least four distinct points
$p^\pm, q^\pm$ on $\partial X$, which loxodromic elements cannot do.
Thus for every $g\in G$
we have that $a^j$ and $ga^jg^{-1}$ both have the two fixed points
$p^+$ and $p^-$.
But the latter element actually has fixed
points $g(p^+)$ and $g(p^-)$ in the action of $G$ on $\partial X$,
so $\{p^+,p^-\}$ is preserved by every element
of $G$ and hence the action must be lineal of type (3).
Here we can have elements
of $G$ which swap the two points, but if so then we can avoid this by
dropping down to a subgroup of index 2.

Therefore let $G_0$ be the index 1 or 2 subgroup of $G$ where the action
preserves $p^+$ and $p^-$ pointwise. We can now take the Busemann
quasicharacter $q:G_0 \rightarrow \R$ (at $p^+$ say)
which is a homogeneous quasi-morphism
on $G_0$ that restricts to a genuine homomorphism $\theta =q|_{G_0\cap A}$
on $G_0\cap A$ (which is amenable). Note that $\theta$ is non-trivial
because $a\in G_0\cap A$ is acting loxodromically.

We will now show that we can ``lift'' $\theta$ to $G_0$, in that there
is some homomorphism $\Theta: G_0\rightarrow \R$ which restricts to
$\theta$ on $G_0\cap A$.
As $G$ is a $GBS_n$ group with a graph of groups decomposition giving us
a presentation for $G$, we can do the same for $G_0$ by restricting
the action of $G$ on the Bass - Serre tree $T$ to $G_0$ and then taking a
quotient. We already have $\Theta$
defined on $G_0\cap A$, which is a finite index subgroup of the vertex group
at the base vertex in the finite graph $\Gamma=G_0\backslash T$
and we can extend $\Theta$ to the group generated by all vertex groups
but without the relations from the stable letters. This is because all 
other defining
relations are each given by an isomorphism between finite index subgroups
of vertex groups, so we proceed inductively by extending $\Theta$ to
the image of the edge group in the new vertex group, then we can extend
over this vertex group.

However we must also consider the defining relations for $G_0$ coming
from the stable letters. These are all of the form
\[tx_1^{l_1}\ldots x_n^{l_n}t^{-1}=y_1^{m_1}\ldots y_n^{m_n}\]
where $t$ is one of these stable letters. We allow $\Theta$ to send the
stable letters anywhere but $h_1:=x_1^{l_1}\ldots x_n^{l_n}$ is an element
in the vertex group at one end of the edge defining $t$ and
$h_2:=y_1^{m_1}\ldots y_n^{m_n}$ is in the vertex group at the other end.
We must now show that $\Theta$ is still well defined when these relations
of the form $th_1t^{-1}=h_2$
are added. Now $h_1$ and $h_2$ both lie in vertex stabilisers so
by commensurability there will be $M>0$ such that $h_1^M$ and $h_2^M$ are both
in our base $\Z^n$-subgroup $G_0\cap A$.
But $t,h_1,h_2$ all lie in
$G_0$ which is the domain of the homogeneous quasi-morphism $q$, thus we
have $q(th_1^Mt^{-1})=q(h_2^M)$. Now homogeneous quasi-morphisms are
invariant under conjugation (because $|q(yxy^{-1})-q(x)|$ is bounded
independently of $x$, so we can replace $x$ with $x^m$ and let $m$ tend
to infinity), thus this becomes $q(h_1^M)=q(h_2^M)$.
But $q(h_i^M)=\theta(h_i^M)$ for $i=1,2$ as $h_1^M,h_2^M$ are in the domain
of $\theta$ and we have $\Theta(h_i)=(1/M)\Theta(h_i^M)$ by definition of
$\Theta$ above. Also $\Theta(th_1t^{-1})=\Theta(h_1)$ as we are mapping
to $\R$, so this is also equal to
\[(1/M)\Theta(h_1^M)=
(1/M)\theta(h_1^M)=(1/M)\theta(h_2^M)=(1/M)\Theta(h_2^M)=\Theta(h_2)\]
so $\Theta$ is indeed well defined.

We can now finish the proof. The subgroup $G_0$ obtained above
has index 1 or 2 in $G$ and there are only finitely many index 2 subgroups
in $G$ as it is finitely generated. Let $G_2$ be the intersection of all
of these index 2 subgroups, which will have finite index in $G$. As $G$
has infinite monodromy, the same is true for $G_2$ where we can take our
base $\Z^n$-subgroup to be $G_2\cap A$. Thus by
Theorem \ref{fab} applied to $G_2$ with base $\Z^n$-subgroup $G_2\cap A$,
there is some non-identity
$z\in G_2\cap A$ which is trivial in the free abelianisation of $G_2$.
Let us now suppose that $G$ does have an action on some hyperbolic space
$X$ in which $z$ is loxodromic, so we can run through the above
argument with $z$ equal to $a$. But then we will obtain a homomorphism
$\Theta$ from some index 2 subgroup of $G$ to $\R$ with $\Theta(z)\neq 0$
and this subgroup will contain $G_2$. Thus we can restrict $\Theta$
to $G_2$ which is a contradiction because $z$ is trivial in the free
abelianisation of $G_2$.
\end{proof}

Note that this theorem can be used to complement Theorem \ref{fnmn} in that
if a $GBS_n$ group $G$ has infinite monodromy then it can have no finite
index subgroup $H$ which is isomorphic to a 
a direct product of a free group and copies of $\Z$. This is because
$H$ would have infinite monodromy as well, thus by Theorem \ref{act}
there would be a non-trivial element of $H$ which cannot be loxodromic
in any action of $H$ on a hyperbolic space. But if $H$ has the above form
then we can use actions on trees to make any given element loxodromic.

This at least deals with a potential ambiguity which we have previously
glossed over: that $G$ could have different decompositions as a generalised
Baumslag - Solitar group. For instance $\Z^{n+1}$ is both a $GBS_{n+1}$
group in a trivial way where the underlying graph is a single vertex
and also a $GBS_n$ group $\Z^n\times \Z$ with graph a vertex and an
edge, where the stable letter acts trivially by conjugation and where
we have the freedom to take any primitive element as the stable letter.
Here the monodromy is trivial in all cases, but if we take the fundamental
group of the Nil torus bundle
\[\langle t,a,b\,|\, [a,b]=e, tat^{-1}=ba, tbt^{-1}=b\rangle \]
which is of the form $\Z^2\rtimes\Z$, we see that we have a decomposition
with base group $\langle a,b\rangle$ and stable letter $t$, or base
group $\langle t,b\rangle$ and stable letter $a$. The respective
modular homomorphisms of these two decompositions are different
but both have infinite monondromy.

The above tells us that whether the monodromy is finite or infinite (which is
our main concern in this paper) is an invariant only of the group $G$, not of
how it decomposes as a generalised Baumslag - Solitar group.
In fact in nearly all cases the modular homomorphism itself and thus the
monodromy (up to conjugation in $GL(n,\Q)$) is well defined. We state
this as:
\begin{prop}
  Take any two decompositions of the same group $G$ as a generalised
  Baumslag - Solitar group. If neither of the two actions on the
  corresponding Bass - Serre trees have an invariant line or point,
  the modular homomorphisms corresponding to the two decompositions are
  the same.
\end{prop}
\begin{proof} Here we adapt
the result in \cite{fordef} Corollary 6.10 which achieves this for $n=1$.
Suppose that
a group $G$ has two decompositions as a generalised Baumslag - Solitar
group with the first of rank $n$ say. By considering cohomological
dimension, the second is also of rank $n$ unless one decomposition is
trivial (in which case $G$ is abelian and so the modular homomorphism
obtained from any possible decomposition will be trivial too).
Hence we have two actions of $G$ on trees $T_1$ and $T_2$
say and hence two partitions $\{E_1,H_1\}$ and $\{E_2,H_2\}$
of $G$ into elliptic and hyperbolic elements
according to these actions. But if these partitions are the same then
the resulting modular homomorphisms are the same.
This is because elliptic
elements are sent to the identity and the image of a hyperbolic element
$g$ is determined by how it conjugates elements in any base $\Z^n$-subgroup.
But under any action of $G$ on a tree (by automorphisms without
inverting edges), a finitely generated purely elliptic subgroup will
fix some vertex, so we can use the same base $\Z^n$-subgroup in either
decomposition.

Thus we are done if we have a characterisation of the hyperbolic and
elliptic elements of $G$ which does not depend on the particular action.
In \cite{fordef} this is achieved (in all but these exceptional cases)
for $n=1$ by showing that the elliptics are the elements of $G$
which are commensurable with all their conjugates. Here we can instead argue:
suppose that $g\in G$ lies in $E_2\setminus E_1$ so that it fixes a vertex
$v_2$ when $G$ acts on $T_2$ but which is hyperbolic when $G$ acts on $T_1$.
The action of $G$ on $T_2$ gives rise to a generalised Baumslag - Solitar
decomposition of $G$ where we can take the base group to be $Stab_G(v_2)$.
Thus on taking $g$ equal to $a$ in the proof of Theorem \ref{act} where we
use the action of $G$ on $T_1$ with $g$ loxodromic, we conclude by
the same argument that $G$ fixes setwise the axis of $g$. This means that
this $GBS_n$ decomposition of $G$ gives rise to an invariant line
when $G$ acts on the Bass - Serre tree which is ruled out by hypothesis.
(In fact a brief check reveals that in the exceptional case,
$G$ can only be equal to
$\Z^n$, $\Z^n\rtimes_\alpha\Z$ for $\alpha$ some automorphism of $\Z^n$,
or $\Z^n*_C\Z^n$ where $C$ has index 2 in both copies of $\Z^n$.) Thus
we see that $g$ was not loxodromic
and so $E_2\subseteq E_1$. We can now swap the actions and argue again to
conclude that $E_1=E_2$ and $H_1=H_2$.
\end{proof}

We finish this section by noting though that 
even though the modular homomorphism is well defined (away from these
exceptional cases), a group might
have many decompositions as a generalised Baumslag - Solitar group
which are not obviously related. For instance the isomorphism problem
is open just amongst $GBS_1$ groups (see \cite{mntil} for some recent
progress on this question). Moreover \cite{arm} exhibits $GBS_4$ groups with
unsolvable conjugacy problem.

\section[Actions on products of hyperbolic spaces]{Acylindrical actions on
 products of hyperbolic spaces}

\subsection{Product acylindrical actions}

Given a group $G$ acting by isometries on a metric space $X$, a well known
definition is that of $G$ acting {\bf acylindrically}: that is, given any
$\epsilon\geq 0$ we have $N,R$ such that if $x,y\in X$ are two points which
are at least distance $R$ apart then the set of group elements moving 
both $x$ and $y$ by at most $\epsilon$ has cardinality at most $N$. This
definition is generally used when $X$ is a hyperbolic metric space
whereupon it gives rise to the concept of a group being {\bf acylindrically
  hyperbolic}. This is where there exists an acylindrical action which is of
type (5) in the earlier list (in fact actions of types (2) and (4) can
never be acylindrical on hyperbolic spaces)
and which implies a number of consequences, for instance
such a group must be SQ-universal.

Observe that the definition of an acylindrical action makes sense for any
group action by isometries on any metric space. However such a concept is
not useful in this generality. First of all if $X$ is bounded then any
action is trivially acylindrical, so any suitable notion needs to avoid
this case.
(Indeed even if $X$ is unbounded then an acylindrical action
can still have bounded orbits, but not all actions with bounded orbits
need be acylindrical.) But even this is problematic because any
geometric action on any metric space is uniformly metrically proper, which
in turn implies that the action is acylindrical. Therefore any finitely
generated group acts acylindrically on its own Cayley graph and consequently
there is no chance that an acylindrical action automatically implies
any group theoretic consequences, even if we could agree on what a
suitable acylindrical action meant in this context.

But one option is to restrict the metric space $X$ to being hyperbolic-like
or to have non-positive curvature in some sense, whilst making it more general
than just a hyperbolic space. In this section we look at what happens when
we allow $X$ to be a finite product of hyperbolic spaces rather than just
one. This will allow us to create obstructions to $GBS_n$ groups having
such an action, which we will then compare to the class of hierarchically
hyperbolic groups in the next section. Suppose we have a product of $r$
metric spaces $P=X_1\times \ldots \times X_r$ (where $P$ is equipped with
the $\ell_1$ product metric) and an isometric action of a group $G$ on $P$.
Note that $Isom(X_1)\times \ldots \times Isom(X_r)$ is naturally a subgroup
of $Isom(X)$ using the diagonal action.
We say that $G$ acts on $P$ {\bf preserving factors} if the image of this
action lies inside $Isom(X_1)\times\ldots\times Isom(X_r)$, whereupon
we can think of any element $g\in G$ as having an expression $(g_1,\ldots ,g_r)$
with $g_i$ an isometry of $Isom(X_i)$. We now introduce our main definition
of this section.
\begin{defn} If a group $G$ acts isometrically on $P=X_1\times \ldots
\times X_r$ where each $X_i$ is a hyperbolic space then we say that
the action is {\bf product acylindrical} if the action on $P$ is acylindrical,
preserves the factors of $P$ and such that there is an element
$g=(g_1,\ldots , g_r)$ in $G$ where some $g_i$ acts as a loxodromic element
on the space $X_i$.
\end{defn}

Note that if $r=1$ then, as opposed to the standard definition of an
acylindrically hyperbolic group, we allow actions of type (3) (whereupon
our group will be virtually cyclic)
with an infinite order element acting loxodromically
as we will not need to treat this as a special case. However in common
with the standard definition, we rule out any action with bounded orbits.

We make some points about the above definition. First it need
not be the case that if $G$ acts acylindrically on on a space $X$ and
arbitrarily on another space $Y$ then the product action on $X\times Y$
is acylindrical. (One could take any acylindrical action of some group
$G$ on $X$ where a
point has an infinite stabiliser and then set $Y=\mathbb R$ with $G$
acting as the identity on $Y$.) However if $G$ acts uniformly metrically
properly on $X$ and arbitrarily on $Y$ then the product action on
$X\times Y$ will also be uniformly metrically proper and hence will be
an acylindrical action. Thus if $G$ does have a product acylindrical action
then we can say nothing in general about the action on any individual
factor. Moreover we do obtain groups which have a product acylindrical
action but which are not themselves acylindrically hyperbolic, for
instance $F_2\times F_2$ or Burger - Mozes - Wise groups acting
geometrically on a product of two trees. As these last groups can
be virtually simple, we also note that we do not have any result
for product acylindrical actions of the form: a group with 
such an action is SQ-universal
or virtually cyclic which we do have for unbounded acylindrical actions on a
single hyperbolic space.

In Definition \ref{cyc} we saw a division of isometries into
a trichotomy of elliptic/loxodromic/parabolic elements when the space
is hyperbolic. Note that we still
have this trichotomy for a group acting on an arbitrary metric space $X$:
elliptic elements have bounded orbits, loxodromic elements have orbits
which quasi-isometrically embed in $X$ and everything else is a parabolic
element. In general we might not have a nice description of how these
isometries act on the boundary of $X$ (indeed there might not even be
a suitable boundary of $X$).
However we do have a simple way of classifying an element of a group
acting on a finite product of metric spaces preserving factors
if we know how this element is behaving on each of the factors.
\begin{lem} \label{cycp}
Let $G$ act on the product of metric spaces $P=X_1\times \ldots\times X_r$
by isometries preserving factors and take $g=(g_1,\ldots ,g_r)\in G$ where
$g_i$ acts isometrically on $X_i$. Then:\\
(i) $g$ acts elliptically on $P$ if and only if $g_i$ acts elliptically
on $X_i$ for each $1\leq i\leq r$.\\
(ii) $g$ acts loxodromically on $P$ if and only if there is some $i$
where $g_i$ acts loxodromically on $X_i$.\\
(iii) $g$ acts parabolically on $P$ if and only if no $g_i$ acts
loxodromically on $X_i$ but some $g_i$ acts parabolically on $X_i$.
\end{lem}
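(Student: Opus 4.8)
The plan is to prove all three parts essentially at once, deriving (iii) from (i) and (ii) since the three cases are mutually exclusive and exhaustive for any isometry of a metric space. The key metric fact is that with the $\ell_1$ product metric we have $d_P\big((g_1,\ldots,g_r)^k(x_1,\ldots,x_r),(x_1,\ldots,x_r)\big)=\sum_{i=1}^r d_{X_i}(g_i^k(x_i),x_i)$ for every $k\in\Z$ and every basepoint. So the displacement function of $g$ on $P$ is the sum of the displacement functions of the $g_i$ on the $X_i$; everything will follow from comparing growth rates of these nonnegative quantities.

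For (i): if each $g_i$ is elliptic then each orbit $\{g_i^k(x_i)\}$ is bounded, hence the sum is bounded, so $g$ is elliptic on $P$; conversely a bounded orbit in $P$ projects (via the coordinate projections, which are $1$-Lipschitz for the $\ell_1$ metric) to a bounded orbit in each $X_i$, so each $g_i$ is elliptic. For (ii): if some $g_i$, say $g_1$, is loxodromic on $X_1$ then there is $c>0$ with $d_{X_1}(g_1^k(x_1),x_1)\geq |k|c$ for all $k$, and since the other terms in the sum are nonnegative we get $d_P(g^k(x),x)\geq |k|c$, so $g$ is loxodromic on $P$. Conversely, suppose no $g_i$ is loxodromic on $X_i$; I must show $g$ is not loxodromic on $P$, i.e. the orbit map $k\mapsto g^k(x)$ is not a quasi-isometric embedding of $\Z$. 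It suffices to show that $d_P(g^k(x),x)/|k|\to 0$ along some subsequence of $k\to\infty$, i.e. that $\sum_i d_{X_i}(g_i^k(x_i),x_i)$ grows sublinearly along a subsequence. For each $i$, since $g_i$ is elliptic or parabolic, the sequence $n\mapsto d_{X_i}(g_i^n(x_i),x_i)$ grows sublinearly — in the elliptic case it is bounded, in the parabolic case this is precisely the standard fact (see Definition \ref{cyc} and the remarks around it: a parabolic element is by definition one that is neither elliptic nor loxodromic, and on a hyperbolic space the failure of the linear lower bound for all basepoints propagates to sublinear growth). Summing finitely many sublinearly-growing nonnegative sequences gives a sublinearly-growing sequence, so $g$ is not loxodromic on $P$. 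Then (iii) is immediate: if some $g_i$ is parabolic and none is loxodromic, then by (i) $g$ is not elliptic and by (ii) $g$ is not loxodromic, so $g$ is parabolic; conversely if $g$ is parabolic on $P$ then by (i) not all $g_i$ are elliptic and by (ii) no $g_i$ is loxodromic, so some $g_i$ is parabolic.

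The main obstacle is the converse direction of (ii): turning "no factor is loxodromic" into "the product is not loxodromic." The subtlety is that loxodromicity is a linear lower bound on displacement that must hold simultaneously for all powers, whereas a parabolic element can have displacement growing almost linearly (just not linearly). I would want to be careful to extract sublinearity cleanly — the cleanest route is to observe that for a parabolic isometry of a hyperbolic space $X$, for any basepoint $x$ one has $\liminf_{n\to\infty} d_X(g^n(x),x)/n = 0$ (indeed $d(g^{2n}(x),x)\leq 2d(g^n(x),x)+O(\delta)$ by thin triangles / the quasi-geodesic combination, or more simply because a positive $\liminf$ would, together with subadditivity $d(g^{m+n}(x),x)\leq d(g^m(x),x)+d(g^n(x),x)$ and Fekete's lemma, force a positive translation length and hence loxodromicity). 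Using Fekete's subadditive lemma, the translation length $\tau_i:=\lim_n d_{X_i}(g_i^n(x_i),x_i)/n$ exists, and $\tau_i>0$ would give loxodromicity of $g_i$; so $\tau_i=0$ for all $i$, whence $\tau_P:=\lim_n d_P(g^n(x),x)/n=\sum_i\tau_i=0$, and a zero-translation-length isometry of any metric space cannot be loxodromic. This subadditive-limit argument also streamlines everything else and avoids any appeal to the boundary of $X_i$, which is consistent with the paper's remark that products of hyperbolic spaces need not be hyperbolic and need not have a useful boundary. I would present the proof via this translation-length/Fekete route for uniformity across the three parts.
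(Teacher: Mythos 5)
Your proposal is correct and follows essentially the same route as the paper: both directions of (i) and the easy direction of (ii) come from the coordinatewise $\ell_1$ estimates, the converse of (ii) is handled by showing the stable translation length of $g$ on $P$ is the sum of the (vanishing) stable translation lengths of the $g_i$, and (iii) follows by exclusion. Your Fekete/subadditivity justification merely makes explicit the standard fact the paper invokes when it asserts that $d_i(g_i^m(x_i),x_i)/m\to 0$ for non-loxodromic $g_i$.
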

\begin{proof}
The first case is straightforward to establish in both directions, just by
using the fact
that if in the product space $P=X_1\times \ldots \times X_r$ we have
points $x=(x_1,\ldots ,x_r)$ and $y=(y_1,\ldots ,y_r)$ then
$d_{X_i}(x_i,y_i)\leq d_P(x,y)$. The same is true for the reverse implication
of the second case.

Thus let us now suppose without loss of generality that $g_1$ acts
parabolically on $X_1$ and $g_i$ does not act loxodromically on $X_i$
for any $1\leq i\leq r$. Then
the orbit of any  point $x_1\in X_1$ under $\langle g_1\rangle$ will be
unbounded so the orbit of any $x\in P$ under $\langle g\rangle$ will be too.
Thus in order to establish the forward direction of (ii) and the reverse
direction of (iii), 
we just need to show that $g$ is not loxodromic, for which we can
use stable translation length. In particular for any point
$x=(x_1,\ldots ,x_r)\in P$, we have
$d_i(g_i^m(x_i),x_i)/m$ tending to zero as $m$ tends to
infinity because no $g_i$ acts loxodromically. But then
by adding and using inequalities we have that
$d_P(g^m(x),x)/m$ tends to zero too, thus we cannot have $c>0$ and
$\epsilon\geq 0$ with $d_P(g^m(x),x)\geq cm-\epsilon$ for all $m\in\N$
and so $g$ does not act loxodromically on $P$. Finally if $g$ acts
parabolically on $P$ then we cannot be in Cases (i) or (ii) by what we
have already shown.
\end{proof}

It was shown in \cite{bow} Lemma 2.2
that if $G$ acts acylindrically on a hyperbolic
space $X$ then no element can act parabolically.  For a general metric
space $X$, we can certainly have acylindrical actions with elements
acting parabolically. For instance take any finitely generated group $G$
with a distorted infinite cyclic subgroup $\langle g\rangle$. Then $G$
acts geometrically and hence acylindrically on its own Cayley graph,
but the action of $g$ here will be parabolic.

We now show however that we cannot have parabolic elements in a
product acylindrical action.
\begin{thm} \label{nop}
Suppose that a group $G$ acts acylindrically and preserving factors
on the product $P$ of hyperbolic spaces $X_1\times \ldots \times X_r$.
Then no element of $G$ acts as a parabolic element on $P$.
\end{thm}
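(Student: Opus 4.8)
The plan is to suppose, for a contradiction, that some $g=(g_1,\dots ,g_r)\in G$ acts parabolically on $P$, and then to manufacture, for a carefully chosen $\epsilon$, strictly more than $N_\epsilon$ group elements each moving two points of $P$ that are at least $R_\epsilon$ apart by at most $\epsilon$, contradicting acylindricity. By Lemma \ref{cycp}(iii) we may relabel the factors so that $g_1$ acts parabolically on the hyperbolic space $X_1$ and no $g_j$ acts loxodromically on $X_j$; in particular $g$ has infinite order and $\langle g_1\rangle$ has unbounded orbits in $X_1$, hence so does $\langle g_1\rangle$ acting on any point of $X_1$.

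The main work is the following statement about a single parabolic isometry, which I would isolate as a lemma: \emph{if $h$ is a parabolic isometry of a hyperbolic space $Y$ with unique boundary fixed point $\xi$, then there is a constant $C=C(h,Y)$ such that for every $m\in\N$ there exists $y\in Y$ with $d_Y(h^ky,y)\le C$ for all $k$ with $|k|\le m$.} The essential point — and the main obstacle — is that $C$ must not depend on $m$. To prove it I would fix a basepoint $o\in Y$ and a geodesic ray $\gamma$ from $o$ to $\xi$, and use the Busemann machinery recalled in Section 2 (from \cite{ccmt}): since $h$ fixes $\xi$ but is not loxodromic, the Busemann quasicharacter $q$ at $\xi$ vanishes on $h$, and as $q$ is the homogenisation of the quasimorphism $\psi(x)=b_\xi(xo)-b_\xi(o)$ (which has some bounded defect $D_0=D_0(h,Y)$) we get $|\psi(h^k)|=|\psi(h^k)-q(h^k)|\le D_0$ for all $k$, since $q(h^k)=kq(h)=0$. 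Thus the whole orbit $\{h^ko\}$ stays within bounded Busemann depth of $o$. Now each $h^k\gamma$ is a geodesic ray from $h^ko$ to $\xi$, asymptotic to $\gamma$, and a standard estimate for asymptotic geodesic rays in a $\delta$-hyperbolic space gives $\limsup_{t\to\infty}d_Y(\gamma(t),h^k\gamma(t))\le |b_\xi(o)-b_\xi(h^ko)|+O(\delta)\le C:=D_0+O(\delta)$, with the implied constant depending only on $\delta$. For each fixed $k$ choose $T_k$ with $d_Y(\gamma(t),h^k\gamma(t))\le C+1$ for $t\ge T_k$; then $y:=\gamma(\max_{|k|\le m}T_k)$ works, and $C+1$ is independent of $m$.

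With this lemma established I would finish by bookkeeping with the $\ell_1$ metric on $P$. For each $j$ with $g_j$ parabolic on $X_j$ let $C_j$ be the lemma's constant for $h=g_j$, $Y=X_j$; for each $j\ge 2$ with $g_j$ elliptic let $\Delta_j=\mathrm{diam}_{X_j}(\langle g_j\rangle x_j)<\infty$, where $x_j$ is the $j$th coordinate of a basepoint; put $C=\max_j C_j$, let $\Delta$ be the sum of the $\Delta_j$ over the elliptic $j\ge 2$, and set $\epsilon=rC+\Delta+1$. This $\epsilon$ depends only on the spaces $X_j$ and the isometries $g_j$, so acylindricity of the action on $P$ now supplies $R_\epsilon$ and $N_\epsilon$; put $m=N_\epsilon+1$. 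Using the lemma in each parabolic coordinate (with this $m$) and the basepoint in each elliptic coordinate, build $y=(y_1,\dots ,y_r)\in P$ with $d_P(g^ky,y)=\sum_j d_{X_j}(g_j^ky_j,y_j)\le rC+\Delta<\epsilon$ for all $|k|\le m$. Since $\langle g_1\rangle$ has unbounded orbits, pick $n_0$ with $d_{X_1}(y_1,g_1^{n_0}y_1)\ge R_\epsilon$ and set $z=g^{n_0}y$; then $d_P(y,z)\ge d_{X_1}(y_1,z_1)\ge R_\epsilon$, while $d_P(g^kz,z)=d_P\big(g^{n_0}(g^ky),g^{n_0}y\big)=d_P(g^ky,y)\le\epsilon$ for all $|k|\le m$, because $g^{n_0}$ acts as an isometry of $P$. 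Hence the $2m+1>N_\epsilon$ distinct elements $g^k$ with $|k|\le m$ all move both $y$ and $z$ by at most $\epsilon$ while $d_P(y,z)\ge R_\epsilon$, contradicting the choice of $R_\epsilon$ and $N_\epsilon$.

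The only non-formal ingredient is the uniform-in-$m$ bound in the lemma; taken together with acylindricity it in fact recovers \cite{bow} Lemma 2.2 when $r=1$, so in the write-up one could alternatively quote the relevant estimates from the proof of that result. I would nonetheless prove the lemma directly as above, since it is precisely the factor-preserving product setting (where the other coordinates must be controlled simultaneously for every power, by a constant not depending on how many powers) that forces one to track the uniform constant rather than the single-space statement.
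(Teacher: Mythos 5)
Your argument is correct and reaches the contradiction by the same endgame as the paper (fix $\epsilon$ in advance, find a point moved by at most $\epsilon$ by all $g^k$ with $|k|\le m$, then translate by a large power of $g$ to produce a second such point at distance at least $R_\epsilon$, using that powers of $g$ commute), but the key uniform displacement bound is obtained by a genuinely different route. The paper applies the Breuillard--Fujiwara joint minimum displacement trichotomy (\cite{brkj} Theorem 13.1) to the finite symmetric sets $S_N=\{g_i^{-N},\ldots,1,\ldots,g_i^N\}$ in \emph{every} factor, elliptic or parabolic alike, obtaining for each $N$ a point $x_i^{(N)}$ moved by at most $2C\delta_i$ by all of $S_N$; this treats all non-loxodromic factors uniformly with a single citation and never needs a boundary fixed point or a ray. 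You instead isolate the parabolic case and prove the uniform-in-$m$ bound directly via the Busemann quasicharacter of \cite{ccmt}: since $q$ vanishes on non-loxodromics, the orbit $\{h^k o\}$ stays at bounded Busemann depth, and asymptotic rays to the fixed point are eventually parametrized-close up to that depth plus $O(\delta)$. Your version is more self-contained and makes the geometric mechanism transparent (powers of a parabolic coarsely preserve horospheres, so points deep in a horoball are barely moved), and it handles the elliptic factors by the trivial orbit-diameter bound. The one technical point you should patch is the phrase ``a geodesic ray $\gamma$ from $o$ to $\xi$'': since the paper explicitly allows non-proper hyperbolic spaces, such a ray need not exist, and one must work with a quasi-geodesic ray or a sequence converging to $\xi$ as in \cite{ccmt}, at the cost of extra additive constants that do not affect the argument. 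With that adjustment (or by simply quoting the displacement estimates from the proof of \cite{bow} Lemma 2.2, as you note), your proof is complete.
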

\begin{proof}
Suppose otherwise, so that by Lemma \ref{cycp}
we have $g=(g_1,\ldots ,g_r)$ acting on $P$
where without loss of generality 
the action of $g_1$ on $X_1$ is parabolic and none of the actions of
$g_i$ on $X_i$ are loxodromic for $1\leq i\leq r$. We now invoke
\cite{kou} Proposition 3.2. This states that if $X$ is a geodesic
$\delta$-hyperbolic space and $S$ is any finite set of isometries of
$X$ such that neither $S$ nor $S^2$ contains a loxodromic element, then
the joint minimum displacement $L(S)$ is at most $100\delta$.
Here $L(S)$ is defined to be the
infimum over all points $x\in X$ of the joint displacement
$\mbox{max}_{s\in S} d(x,s(x))$.

For each $i$ we apply this result to the $\delta_i$-hyperbolic space $X_i$.
We have no loxodromic elements in $\langle g_i\rangle$, thus for any finite
subset $S$ of $\langle g_i\rangle$ there is some point $x_i\in X_i$ which
is moved by at most distance $101\delta_i$ by any $s\in S$. In particular
for any $N>0$, this applies to the set $S_N=\{g_i,\ldots ,g_i^N\}$. We will
label the point obtained above when this
result is applied with $S=S_N$ as $x_i^{(N)}\in X_i$.

We now show that this action of $\langle g\rangle$ on $P$ is not acylindrical
and hence nor is the action of $G$ on $P$. Set $\epsilon$ to be $101\delta r$
where $\delta=\mbox{max}(\delta_1,\ldots ,\delta_r)$
and suppose we are given $R$ and $N$. Now as $g_1$ does act parabolically
on $X_1$, the orbit of $x_1^{(N)}$ under $\langle g_1\rangle$
is not bounded and so we can find $g_1^K$
where, on setting $y_1^{(N)}:=g_1^K(x_1^{(N)})$ and then defining
$y_i^{(N)}:=g_i^K(x_i^{(N)})$ for the same $K$, we have
$d_1(x_1^{(N)},y_1^{(N)})\geq R$ where $d_i$ is the distance in $X_i$.

Now for all $1\leq i\leq r$ and $1\leq j\leq N$ we have that
$d_i(g_i^j(x_i^{(N)}),x_i^{(N)})\leq 101\delta$ and also
\[d_i(g_i^j(y_i^{(N)}),y_i^{(N)})
  =d_i(g_i^{K+j}(x_i^{(N)}),g_i^K(x_i^{(N)}))=
  d_i(g_i^j(x_i^{(N)}),x_i^{(N)})
\]
which therefore is at most $101\delta$ as well.
Thus if we take the two points $x=(x_1^{(N)},\ldots ,x_r^{(N)})$ 
and $y=(y_1^{(N)},\ldots ,y_r^{(N)})$ of $P$ then we have
$d(x,y)\geq d_1(x_1^{(N)},y_1^{(N)})\geq R$ but the $N$ distinct elements $g^j$
for $1\leq j\leq N$ satisfy
\[d(g^j(x),x)=d_1(g_1^j(x_1^{(N)}),x_1^{(N)})+\ldots +
  d_r(g_r^j(x_r^{(N)}),x_r^{(N)})=d(g^j(y),y)\leq 101\delta r\]
so that each of these elements moves both $x$ and $y$ by at most $\epsilon$.
\end{proof}

\subsection{Acylindrical actions of generalised Baumslag-Solitar groups}

A generalised Baumslag-Solitar group can never be acylindrically
hyperbolic. This can be seen by using \cite{osmn} Theorem 3.7,
which states that if $G$ is acylindrically hyperbolic and has a subgroup
$H$ where $H\cap gHg^{-1}$ is infinite for all $g\in G$ then $H$ must
itself be acylindrically hyperbolic. For a $GBS_n$ group $G$ we can of course
take $H$ to be a base $\Z^n$-subgroup for a contradiction.

However certainly there are $GBS_n$ groups which possess a product acylindrical
action, for instance $\Z^n\times F_r$ has an obvious geometric (and hence
acylindrical) action on $\R\times\ldots \times \R\times T_{2r}$ where
$T_d$ is the regular tree of degree $d$. We are interested in when a
$GBS_n$ group $G$ has a finite index subgroup $H$ possessing a product
acylindrical action. (Note that even for acylindrical hyperbolicity, it is not
known whether $H$ having this property and $H\leq_f G$ implies that $G$ has
this property too. See \cite{osmnc} which is a correction to \cite{osmn}
Lemma 3.9.) However if we consider the property of virtually having
a proper acylindrical action then we can now give a complete answer in the
case of $GBS_n$ groups.

\begin{thm} \label {pacg}
For any $n\geq 1$, a $GBS_n$ group $G$ has a finite index subgroup $H$
possessing a product acylindrical action if and only if the monodromy
of $G$ is finite.
\end{thm}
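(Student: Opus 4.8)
The plan is to establish both directions of the equivalence by combining the structural results already in hand. For the ``if'' direction, suppose the monodromy of $G$ is finite. Then Theorem \ref{fnmn} gives a finite index subgroup $H$ with $H\cong\Z^n\times F_r$ for some $r\geq 0$. Such a group acts geometrically on $\R^n\times T_{2r}$ (with the $\ell_1$ metric), where $T_{2r}$ is the regular tree of degree $2r$; if $r=0$ we simply take $H=\Z^n$ acting on $\R^n$ by translations, which is a product of $n$ copies of $\R$ (in the degenerate case $n=0$ the group is finite and one can pad with a copy of $\R$ on which it acts trivially to get a loxodromic element --- or note that a finite graph of groups with trivial vertex groups has $G$ free, so $r\geq 1$ and this subtlety does not arise). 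A geometric action is uniformly metrically proper, hence acylindrical, it visibly preserves the factors, and the free part supplies an element acting loxodromically on one of the tree factors (or, when $r=0$, a generator of $\Z^n$ acts loxodromically on an $\R$ factor). Thus the action is product acylindrical.

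For the ``only if'' direction I argue by contraposition: assume the monodromy of $G$ is infinite and suppose for contradiction that some finite index subgroup $H$ has a product acylindrical action on $P=X_1\times\cdots\times X_r$. Since $H\leq_f G$, the discussion in Section 3 shows $H$ is itself a $GBS_n$ group, and by the note following Theorem \ref{act} the monodromy of $H$ is again infinite (taking the base $\Z^n$-subgroup of $H$ to be $H\cap A$). By Theorem \ref{act} applied to $H$, there is a non-trivial element $z$ in the base $\Z^n$-subgroup of $H$ which does not act loxodromically in any isometric action of $H$ on a hyperbolic space --- in particular, for the product acylindrical action, if we write $z=(z_1,\ldots,z_r)$ then no $z_i$ acts loxodromically on $X_i$. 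Now by Theorem \ref{nop} (whose hypotheses --- $H$ acting acylindrically and preserving factors on a product of hyperbolic spaces --- are exactly part of the definition of a product acylindrical action), no element of $H$ acts parabolically on $P$. Combining this with Lemma \ref{cycp}, the element $z$ must act elliptically on $P$, which by Lemma \ref{cycp}(i) forces every $z_i$ to act elliptically on $X_i$, so $\langle z\rangle$ has bounded orbits in $P$.

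It remains to derive a contradiction from $\langle z\rangle$ having bounded orbit. The point is that $z$ lies in the base $\Z^n$-subgroup $H\cap A$, which is commensurated in $H$: for every $h\in H$ some power $z^j$ lies in $(H\cap A)\cap h^{-1}(H\cap A)h$, so $hz^jh^{-1}$ is a conjugate of $z^j$ lying again in the base subgroup and hence also elliptic with bounded orbit. The subgroup generated by all $H$-conjugates of $z$ therefore consists of elliptic elements, but I claim it contains an element acting loxodromically, giving the contradiction. More directly: the definition of product acylindrical guarantees an element $g=(g_1,\ldots,g_r)\in H$ with some $g_i$ loxodromic on $X_i$; restricting attention to that factor $X_i$ and using that $z_i$ is elliptic while $z_i$ lies in a subgroup commensurated by all of $H$ and hence (as $z_i$ commutes with the abelian base) its $\langle g_i\rangle$-translates all share bounded orbits --- this is precisely the mechanism driving the proof of Theorem \ref{act}. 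The cleanest route, and the one I would adopt, is simply to observe that Theorem \ref{act} already extracts the needed contradiction: running its argument with this particular $z$ in the role of $a$, a loxodromic action of $z$ on a hyperbolic space is impossible, yet once $z$ is elliptic on all factors we get nothing useful directly --- so instead I invoke Theorem \ref{act} earlier, at the level of whichever factor $X_i$ witnesses loxodromicity. The main obstacle is bookkeeping: ensuring that the element $z$ furnished by Theorem \ref{act} for $H$ can genuinely be confronted with the loxodromic witness $g$ of the product acylindrical action, which requires knowing that \emph{every} non-trivial element of the base subgroup of a suitable further finite index subgroup fails to be loxodromic (the strengthening of Theorem \ref{fab} alluded to in the remark after that theorem), so that in particular $z$ and the relevant component of $g$ cannot be made compatible. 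I expect the write-up to either invoke that strengthening or, more economically, to note that a product acylindrical action of $H$ restricted to the factor $X_i$ on which $g_i$ is loxodromic is a lineal, quasi-parabolic, or general action of the subgroup $\langle H\cap A, g\rangle$, and in the lineal or quasi-parabolic case the Busemann quasicharacter argument of Theorem \ref{act} applies verbatim, while the general case is ruled out because a base-subgroup element fixes the relevant limit pair --- forcing the action on $X_i$ to be lineal and hence yielding the homomorphism to $\R$ that contradicts $z$ being trivial in the free abelianisation of a finite index subgroup.
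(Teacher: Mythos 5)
Your ``if'' direction and the first half of your ``only if'' direction match the paper: reduce to showing that a $GBS_n$ group with infinite monodromy has no product acylindrical action, take the element $z$ of the base $\Z^n$-subgroup supplied by Theorem \ref{act}, and use Lemma \ref{cycp} together with Theorem \ref{nop} to conclude that $z$ must act elliptically on the product $P$. Up to that point the argument is correct.

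The endgame, however, has a genuine gap: ellipticity of $z$ is not by itself a contradiction, and neither of your two proposed ways of extracting one works. Your first suggestion --- that the subgroup generated by all conjugates of $z$ consists of elliptics yet ``contains an element acting loxodromically'' --- is unjustified; that subgroup lies in the normal closure of the vertex groups and there is no reason for it to contain the loxodromic witness $g$. Your second suggestion --- rerunning the Busemann quasicharacter argument on the factor $X_i$ where $g_i$ is loxodromic --- fails because the whole mechanism of Theorem \ref{act} is fuelled by an element of the \emph{base subgroup} acting loxodromically: it is the abelianness of $A$ that forces conjugates of $a^j$ to preserve the limit pair of $a^j$. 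Here the loxodromic is $g\notin A$ and $z$ is elliptic, so $z$ has no limit pair, need not fix the limit pair of $g_i$, and in any case the Busemann quasicharacter of a lineal action vanishes on elliptic elements, so one only obtains a homomorphism $\Theta$ with $\Theta(z)=0$ --- perfectly consistent with $z$ being trivial in the free abelianisation. What the paper actually does is violate the acylindricity quantifiers directly: let $Z\leq A\cap G_2$ be the (infinite, finitely generated abelian) subgroup of elements trivial in the free abelianisation of $G_2$; every element of $Z$ is elliptic, so the $Z$-orbit of a basepoint $x_0$ is bounded by some $D$. Taking $\epsilon=D$ and any $R$, choose $K$ with $d_P(g^K(x_0),x_0)\geq R$; by commensuration there is $m$ with $g^Kz^{im}g^{-K}\in A\cap G_2$ for all $i$, and since $g^K\in G_2$ these conjugates are still trivial in the free abelianisation of $G_2$, hence lie in $Z$. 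These infinitely many distinct elements move $x_0$ by at most $D$ (they are in $Z$) and move $y_R=g^K(x_0)$ by at most $D$ (conjugation by $g^K$ translates the displacement computation back to $x_0$), contradicting acylindricity. This use of the $g^K$-conjugates of powers of $z$ landing back inside the uniformly bounded elliptic subgroup $Z$ is the missing idea; without it the proof does not close.
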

\begin{proof}
First if the monodromy of $G$ is finite then by Theorem \ref{fnmn}
we have $H\leq_f G$ with $H$ of the form $\Z^n\times F_r$ and this
has a geometric, hence acylindrical, action on the product of $n+1$
hyperbolic spaces which preserves factors.

Now say that the monodromy of $G$ is infinite. Then so will the
monodromy of $H$ for any finite index subgroup $H$, hence we just need to
rule out that $G$ has a product acylindrical action.

We thus suppose that $G$ acts on the product $P$ of hyperbolic spaces
$X_1\times\ldots \times X_r$ by isometries, and that this action preserves
factors and is acylindrical. By Theorem \ref{act} we have an infinite order
element $z$ of $G$ which lies in our base $\Z^n$-subgroup $A$ and
which cannot be loxodromic in any action of $G$ on a
hyperbolic space. Hence on splitting $z$ into its component parts
$(z_1,\ldots ,z_r)$ with each $z_i$ acting as an isometry of the hyperbolic
space $X_i$, we have that $z_i$ must act parabolically or elliptically
on $X_i$. By Lemma \ref{cycp} $z$ must be a parabolic or elliptic element
in the action of $G$ on $P$. But by Theorem \ref{nop} there are no parabolic
elements if we have a product acylindrical action of $G$.

Thus $z$ must be an elliptic element. Whilst we can certainly have
acylindrical actions with elliptic infinite order elements, or indeed
with every element acting elliptically, we are not in the latter case
here because in the definition of a product acylindrical
action we must have some other element
of $G$ which is acting loxodromically.

Hence let $g\in G$ be this element, with $g=(g_1,\ldots ,g_r)$ and without
loss of generality $g_1$ acts loxodromically on $X_1$. Let us set $G_2$
to be the intersection of the index 2 subgroups of $G$ as before and
note that by the
proof of Theorem \ref{act}, we can take for $z$ any non-identity element
of $A\cap G_2$ which is trivial in the free abelianisation of $G_2$.
Consider the subset $Z$ of $A\cap G_2$ consisting of
elements with this property along with the identity
and recall that this forms a subgroup of $A\cap G_2$ and so it is a
finitely generated free abelian group. It is also infinite (for instance it
contains all powers of a given $z\in Z$).
Now for the element $g$ as above, we can replace it by a power (which we
will continue to call $g$) which lies in $G_2$.

We will show that this action of $G$ on $P$ cannot in fact be acylindrical.
Pick any point $x_0\in P$ and let $D$ be an upper bound for the set
$\{d_P(z(x_0),x_0)\,|\,z\in Z\}$. This exists because every element
$z$ is acting
elliptically and so the orbit of $x$ under $Z$ is bounded, because
$Z$ is finitely generated and abelian.

Set $\epsilon=D$ and suppose we are given any $R>0$. As $g$ acts
loxodromically, there will exist $K>0$ such that $d_P(g^K(x_0),x_0)\geq R$
and set $y_R:=g^K(x_0)$. Now by definition of the modular homomorphism,
there will be an integer $m>0$ (depending on $R$) such that $g^Kz^mg^{-K}$
is in our base subgroup $A$ and hence in $G_2\cap A$ as $g^K\in G_2$.
But note that if $z^m$ is trivial in the free abelianisation of $G_2$
then so is $g^Kz^mg^{-K}$ as $g^K\in G_2$, so $z^m$ and $g^Kz^mg^{-K}$ will
have the same image in the free abelianisation of $G_2$. Notice this
also works for powers $z^{im}$ and $g^Kz^{im}g^{-K}$ for any $i>0$.

Thus we have infinitely many elements $\{g^Kz^{im}g^{-K}\,|\,i\in \N\}$
(which are distinct because $z^{im}$ are) lying in $Z$ and so they each
move $x_0$ by at most a distance $D$ in $P$. But clearly we also have
\[d_P(g^Kz^{im}g^{-K}(y_R),y_R)=d_P(g^Kz^{im}(x_0),g^K(x_0))=d_P(z^{im}(x_0),x_0)\]
which is at most $D$, and so this infinite set of elements moves both
$x_0$ and $y_R$ a distance at most $\epsilon$, but $d_P(x_0,Y_R)\geq R$.
Thus $G$ is not acting acylindrically on $P$.
\end{proof}

\section[HHGs and Generalised Baumslag - Solitar groups]{Hierarchically
 hyperbolic groups and Generalised Baumslag - Solitar groups}

\subsection{Hierarchically hyperbolic groups}

The notion of a hierarchically hyperbolic space (HHS) was introduced in
\cite{bhs1} as a way of generalising hyperbolic spaces to include
mapping class groups and many CAT(0) cube complexes. We do not give a
definition here but roughly speaking an HHS is a quasigeodesic metric
space $X$ together with a structure given in terms of
projections $\pi_i:X \rightarrow U_i$ 
to a family (infinite in general)
of hyperbolic spaces $\{U_i\,|\,i\in I\}$ called the domains (these
spaces need not be proper in general and they can also be bounded).
A hierarchically hyperbolic group (HHG) $G$ is not
merely a finitely generated
group quasi-isometric to a HHS but one where there is
a HHS $X$ where the group $G$ acts (or rather quasi-acts) geometrically
on $X$ and permutes the family of hyperbolic spaces by isometries.
This family also has a nesting and an orthogonality relation.
If $G$ is a HHG
then we can take $X$ to be the Cayley graph of $G$ with respect to a finite
generating set along with the usual action of $G$ on itself by left
multiplication (thus without loss of generality we do have a genuine action 
of $G$ on $X$ rather than a quasi-action). We can also assume without loss
of generality that the image $\pi_i(G)$ is coarsely dense in $U_i$ and that
this is uniform over $i\in I$.

At this point we might wonder what properties are possessed by HHGs.
We have:\\
\hfill\\ (i) Hyperbolic groups are HHGs (this is seen by taking the
family of hyperbolic spaces to be a single hyperbolic space).\\
\hfill\\
(ii) If $G$ is an HHG and $H\leq_f G$ then $H$ is an HHG
(\cite{abs} Lemma 2.25).\\
\hfill\\
(iii) If $G_1$ and $G_2$ are HHGs then so is $G_1\times G_2$
(\cite{bhs2} Corollary 8.28).

We might also wonder about obstructions for a given group $G$ to
be an HHG. Here we have:\\
\hfill\\
(1) Any HHG $G$ is finitely presented and has quadratic isoperimetric
inequality (\cite{bhs2} Corollary 7.5).\\
\hfill\\
(2) Given any finitely generated subgroup $H$ of an HHG $G$, either $H$ is
virtually abelian or $F_2\leq H$ (\cite{dhs} Theorem 9.15 where the
condition of being finitely generated is not used, but see also the
correction in \cite{dhsc}).\\
\hfill\\
(3) Every infinite order element $g\in G$ is undistorted in $G$ with
respect to word length of a finite generating set (see \cite{dhs}
Theorem 7.1 but again see also the correction in \cite{dhsc}).

Note that for each ordered pair of these three statements, there exists
a group satisfying the first but not the second.

More recently another obstruction was found in \cite{petspr}.
We take the following result from Remark 4.9 of that paper.
\begin{thm} \label{hhgp}
If $G$ is an infinite  HHG then there exist finitely many unbounded
hyperbolic spaces $X_1,\ldots ,X_r$ for $r\geq 1$
and an isometric action of
some finite index subgroup $H$ of $G$ on the
product space $P=X_1\times \ldots \times X_r$ (with the $l_1$ metric) 
such that $H$ preserves each factor $X_i$ and $H$ acts acylindrically
on the product $P$. Moreover there exists a loxodromic element in the
action of $H$ on $X_i$, thus this action of $H$ is product acylindrical.
\end{thm}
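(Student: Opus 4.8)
The plan is to combine the structural result cited from \cite{petspr} about HHGs with the closure properties of HHGs under finite index subgroups and the bookkeeping provided by Lemma \ref{cycp}. First I would recall (from the discussion in the introduction, following \cite{petspr}) the key input: if $H$ is an HHG then inside its HHG structure there is a canonical finite collection $\{S_1,\ldots,S_r\}$ of pairwise orthogonal domains, invariant under the $H$-action, with the property that every unbounded domain is nested into one of the $S_i$, and this collection is non-empty whenever $H$ is infinite. Because the $S_i$ are pairwise orthogonal, the product region they span is coarsely the product $X_1\times\cdots\times X_r$ of the associated hyperbolic spaces, and the $H$-action on the HHS restricts to an action on this product. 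The point of passing from $G$ to a finite index subgroup $H$ is that the $H$-action must \emph{fix} each $S_i$ rather than merely permute the collection: since there are only finitely many of them, the kernel of the permutation action is a finite index subgroup of $G$, and this is the $H$ we take. Once $H$ fixes each $S_i$ setwise, $H$ acts on each $X_i$ by isometries and hence on $P=X_1\times\cdots\times X_r$ preserving factors.

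Next I would address acylindricity of the $H$-action on $P$. This is exactly the content of the cited results: \cite{petspr} shows that the combined action on the product of these orthogonal domains is acylindrical (this uses that the ambient HHG action on the maximal HHS-structure restricts well to product regions, together with the acylindricity machinery of \cite{bhs1} Theorem 14.3 applied in this setting). So I would simply invoke that, noting that the action preserves the $\ell_1$ product metric on $P$ by construction.

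It remains to check the two assertions in the ``$G$ infinite'' case: each $X_i$ is unbounded and some $g\in H$ has a loxodromic component. Unboundedness of each $X_i$ is built into the choice of the $S_i$ as \emph{unbounded} domains. For the loxodromic element: since $H$ is an infinite group acting acylindrically with unbounded orbits on $P$, and by Lemma \ref{cycp} an element of $H$ is elliptic on $P$ precisely when all its components are elliptic, it cannot be that every element of $H$ acts elliptically on every factor --- otherwise every element would be elliptic on $P$, forcing bounded orbits on $P$ (an acylindrical action in which every element is elliptic has bounded orbits when the group, here $H$, is infinite and finitely generated, by the standard argument). Combining with Theorem \ref{nop}, which rules out parabolic elements in such an action, there must exist $g=(g_1,\ldots,g_r)\in H$ with some $g_i$ loxodromic on $X_i$. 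That is exactly the extra condition making the action product acylindrical in the sense of our definition.

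The main obstacle I anticipate is not any single step but rather faithfully extracting from \cite{petspr} Remark 4.9 the precise statement that the orthogonal family of domains is finite, $H$-invariant, and yields an acylindrical product action --- i.e. making sure the ``product region spanned by pairwise orthogonal domains is coarsely a genuine $\ell_1$-product of hyperbolic spaces on which $H$ acts isometrically preserving factors'' is legitimately a consequence of the HHS axioms and not something requiring extra hypotheses. Since we are permitted to cite that remark directly, the argument above is essentially a repackaging of it together with the elementary observations about passing to a finite index subgroup and locating a loxodromic element via Lemmas \ref{cycp} and Theorem \ref{nop}.
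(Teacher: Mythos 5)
Your overall route is the same as the paper's: quote the result of \cite{petspr} giving a finite $G$-invariant family of pairwise orthogonal unbounded domains into which every unbounded domain nests, pass to the kernel $H$ of the permutation action on this finite family, let $H$ act diagonally on the $\ell_1$-product of these domains, and cite \cite{petspr} Remark 4.9 / \cite{bhs1} Theorem 14.3 for acylindricity. Up to that point the proposal matches the paper.

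The gap is in your argument for the loxodromic element. You begin that paragraph with ``since $H$ is an infinite group acting acylindrically \emph{with unbounded orbits} on $P$'', but unbounded orbits is precisely what has not been established: the domains $X_i$ being unbounded does not by itself say anything about the size of an $H$-orbit inside them. You then try to derive a contradiction from ``every element elliptic'' by invoking a ``standard argument'' that an acylindrical action of an infinite finitely generated group with all elements elliptic has bounded orbits. That statement is a theorem of Osin for acylindrical actions on a \emph{single} hyperbolic space, but you are applying it to a product of hyperbolic spaces, where no such classification is available (recall that acylindricity on general metric spaces is very weak --- every finitely generated group acts acylindrically on its own Cayley graph --- and the paper has to work separately, in Theorem \ref{nop}, even to exclude parabolics from product acylindrical actions). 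So neither the unbounded-orbits hypothesis nor the all-elliptic-implies-bounded step is justified. The paper closes this gap differently: using the HHG axioms one has $g(\pi_j(x))=\pi_j(gx)$ and $\pi_j(G)$ coarsely dense in each domain, hence $\pi_j(H)$ is coarsely dense since $H\leq_f G$; thus the action of $H$ on each unbounded domain $W_j$ is \emph{cobounded}, and a cobounded isometric action on an unbounded hyperbolic space must be of type (3), (4) or (5) in the Gromov classification, each of which contains a loxodromic element. That coboundedness argument is the missing ingredient you need to supply.
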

\begin{proof}
Theorem 3.2 of \cite{petspr} shows that for any HHG $G$ there is a finite
$G$-invariant set ${\cal W} =\{W_1,\ldots ,W_r\}$ of unbounded domains which
are pairwise orthogonal and such that any unbounded domain $U_i$ is nested
in one of these $W_j$.
In the case when $G$ is finite
$\cal W$ is necessarily empty as all domains will be bounded.
Conversely an HHS with all domains bounded is itself bounded, so whenever
$G$ is infinite there will be some unbounded domain and therefore
$\cal W$ is non empty.

Consequently there is a finite index subgroup $H$
of $G$ with $H(W_j)=W_j$ for all $1\leq j\leq r$ and $H$ acts by
isometries on each domain $W_j$.
Moreover the action of $H$ on each $W_j$ is
cobounded. This is because we can assume for any domain $U_i$ and any $g\in G,
x\in X(=G)$ that $g(\pi_i(x))=\pi_i(gx)$ by \cite{dhsc} Remark 2.1.
But we mentioned above that $\pi_i(G)=G(\pi_i(id))$
is coarsely dense and therefore so
is $\pi_i(H)=H(\pi_i(id))$ as $H$ has finite index in $G$.
However any cobounded action of a group on an unbounded
hyperbolic space must contain a loxodromic element, as mentioned above
when we listed the five types of action.

We can now get $H$ to act on the product $W_1\times \ldots \times W_r$
with the $\ell_1$ metric of these unbounded domains using the diagonal
action. This action clearly preserves factors and we have said that
it will contain a loxodromic element. But as pointed out in \cite{petspr}
Remark 4.9, the proof in \cite{bhs1} Theorem 14.3 that $G$ acts
acylindrically on $S$ in the case where $\cal W$ consists of the single
unbounded domain $S$ applies equally to the above action of $H$ on
$W_1\times\ldots\times W_r$ which preserves factors and contains a
loxodromic element in the action, thus this action of
$H$ is product acylindrical.
\end{proof}

\subsection{Generalised Baumslag - Solitar groups and  HHGs}

We can now use the results above for our main application. Our initial
question might be which generalised Baumslag - Solitar groups are
HHGs. However it is possible for a group $G$ to have a finite index subgroup
which is an HHG but for $G$ not to be. This was shown in \cite{petspr}
Corollary 4.5 by using Theorem 3.2 in that paper which we have already quoted
above. Specifically they show that the (orientation preserving)
$(3,3,3)$ triangle group is not an HHG but of course it has the finite index
subgroup $\Z^2$ which is. A group $G$ which is virtually an HHG will
still have good group theoretic and geometric properties as these will
be inherited from the finite index subgroup, so we now give a complete
answer to which generalised Baumslag - Solitar groups are virtually HHGs.
\begin{co} \label{nthhg}
If $G$ is a $GBS_n$ group then $G$ is virtually a HHG if
and only if $G$ has finite monodromy.
\end{co}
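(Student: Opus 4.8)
The plan is to prove the two directions separately, using the dichotomy already established for $GBS_n$ groups together with Theorem \ref{hhgp}.

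First I would dispose of the ``if'' direction. Suppose $G$ has finite monodromy. By Theorem \ref{fnmn}, $G$ has a finite index subgroup $H$ isomorphic to $\Z^n\times F_r$ for some $r\geq 0$. Now $\Z^n$ is a HHG (it is, for instance, CAT(0), but more simply $\Z$ is hyperbolic hence a HHG, and a finite product of HHGs is a HHG by property (iii) above, so $\Z^n$ is a HHG). The free group $F_r$ is hyperbolic, hence a HHG by (i). Applying (iii) once more, $\Z^n\times F_r$ is a HHG, so $H$ is a HHG and thus $G$ is virtually a HHG. (If one prefers, one can cite directly that $\Z^n\times F_r$ is a HHG as it acts geometrically on $\R^n\times T_{2r}$, a product of hyperbolic spaces, which is a standard HHS.)

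For the ``only if'' direction, I would argue by contradiction. Suppose $G$ is a $GBS_n$ group with infinite monodromy, and suppose for contradiction that some finite index subgroup $H$ of $G$ is a HHG. Since $H\leq_f G$ and $H$ itself has a graph of groups decomposition (from restricting the action of $G$ on the Bass--Serre tree $T$ to $H$, as discussed in Section 3), $H$ is again a $GBS_n$ group, and by Theorem \ref{act} (or the remark following it on finite index subgroups, or simply because having finite/infinite monodromy is a group invariant) $H$ has infinite monodromy as well. Now apply Theorem \ref{hhgp} to the HHG $H$: since $H$ is infinite, we obtain a finite index subgroup $H'$ of $H$ (hence of $G$) which admits a product acylindrical action on a finite product of hyperbolic spaces. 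But $H'$ is still a $GBS_n$ group with infinite monodromy, so by Theorem \ref{pacg} (the ``only if'' direction of that theorem, applied with $H'$ in place of $G$ — noting that every finite index subgroup of $H'$ still has infinite monodromy, so $H'$ has no finite index subgroup with a product acylindrical action, in particular $H'$ itself has none) this is a contradiction. Hence no finite index subgroup of $G$ can be a HHG.

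The main obstacle is making sure all the hypotheses line up across the three cited results: Theorem \ref{hhgp} produces a \emph{possibly smaller} finite index subgroup $H'\leq_f H\leq_f G$ with the product acylindrical action, rather than $H$ itself, so one must verify that $H'$ is still a $GBS_n$ group with infinite monodromy before invoking Theorem \ref{pacg}. This is routine: finite index subgroups of $GBS_n$ groups are $GBS_n$ groups (restrict the cobounded tree action), and infinitude of the monodromy passes to finite index subgroups since whether the monodromy is finite or infinite is an invariant of the group alone (by the remark after Theorem \ref{act}). With that in hand the contradiction with Theorem \ref{pacg} is immediate, since that theorem says a $GBS_n$ group with infinite monodromy has \emph{no} finite index subgroup possessing a product acylindrical action — in particular $H'$, which does possess one, cannot have infinite monodromy.
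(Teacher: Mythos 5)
Your proposal is correct and follows essentially the same route as the paper: Theorem \ref{fnmn} plus closure of HHGs under direct products for the ``if'' direction, and Theorem \ref{hhgp} feeding into Theorem \ref{pacg} for the ``only if'' direction. The only (immaterial) difference is that you apply Theorem \ref{pacg} to the subgroup $H'$ produced by Theorem \ref{hhgp}, whereas the paper applies it directly to $G$, whose statement is already phrased in terms of the existence of a finite index subgroup with a product acylindrical action.
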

\begin{proof}
If $G$ has finite monodromy then $G$ is virtually $\Z^n\times F_r$
by Theorem \ref{fnmn} and the latter group is an HHG as it is a direct product
of HHGs.

Now suppose the $GBS_n$ group $G$ has a finite index subgroup $L$
which is a HHG. By Theorem \ref{hhgp} there is some finite index subgroup
$H$ of $L$ which has a product acylindrical action (unless $L$ is finite,
in which case $G$ was not a generalised Baumslag - Solitar group).
But $H$ is then a finite index subgroup of $G$ too and so
by Theorem \ref{pacg} we have that the monodromy of $G$ must be finite.
\end{proof}

The result above in \cite{petspr} that the $(3,3,3)$ triangle group is not
a HHG even though it is virtually $\Z^2$ established that being a HHG is
not a quasi-isometry invariant and indeed not even a commensurability
invariant. However whenever we have a property $\cal P$ of abstract groups
which is invariant under taking finite index subgroups, as is the case for
being a HHG, we can recover a commensurability invariant by using the
property of being virtually $\cal P$. Thus although \cite{petspr} tells us
that being a HHG is not a commensurability invariant and thus not a
quasi-isometric invariant, being virtually a HHG is certainly a
commensurability invariant so the obvious question now is whether it is
preserved under quasi-isometry. By our results here along with famous
recent examples of Leary - Minasyan, we can show the answer is no.
The next theorem is an adaptation of \cite{lrmn} Theorem 7.2 to $GBS_n$
groups.
\begin{thm} \label{latt}
  Suppose that $G$ is a $GBS_n$ group with monodromy
  ${\mathcal M}\leq GL(n,\R)$  which is conjugate in $GL(n,\R)$ to a subgroup
  of the orthogonal group $O(n)$. Then $G$ is a CAT(0) group acting properly,
  freely and cocompactly on $T\times \R^n$, where $T$ is a locally
  finite tree. Conversely if $G$ is virtually a CAT(0) group then
 its monodromy can be conjugated into $O(n)$. 
\end{thm}
\begin{proof} We take for $T$ the Bass - Serre tree of the decomposition
  of $G$ as a $GBS_n$ group, so that $G$ acts on $T$ by tree automorphisms.
  We also pick some base vertex $v_0$ of $T$
  and we set our base $\Z^n$-subgroup $A$ to be the intersection of all
  vertex and edge subgroups obtained from this decomposition, which will
  have finite index in each of these subgroups, along with a
  free generating set $\langle a_1,\ldots ,a_n\rangle$ for $A$. We will now
  define a homomorphism $\theta$ from $G$ to $Isom(\R^n)$
  and then let $G$ act on $T\times \R^n$ via the
  diagonal action, which will be also be by isometries. (This homomorphism
  is similar to the version of the modular homomorphism used in \cite{dcvl},
  which maps to $\R^n\rtimes GL_n(\R)$ but which does not specifically
  consider orthogonal matrices.)
  
  Suppose that the matrix
  $X\in GL(n,\R)$ is such that $X{\mathcal M}(G)X^{-1}$ lies in $O(n)$.
  If $e_1,\ldots ,e_n$ is the standard orthogonal basis for $\R^n$
  then set $\alpha_i$ to be the vector $X(e_i)$ in $\R^n$. We now let
  the base subgroup $A$ act via $\theta$
  on $\R^n$ as a group of translations, where we define
  $\theta(a_i)$ to be the map  $x\mapsto x+\alpha_i$. Recalling how we
  obtain a finite
  presentation for $G$, we next extend $\theta$ to all vertex subgroups
  by using the appropriate translations, where we map consistently
  across edge subgroups for the edges in some
  maximal tree, by working out from $v_0$. This works because any edge
  subgroup $E$ is a copy of $\Z^n$ with finite index in its neighbouring
  vertex subgroup $V$,
  so a homomorphism from $E$ to $\R^n$ has a unique extension from $V$ to
  $\R^n$.

  We now need to set $\theta(t)$
  for all stable letters $t$. Once we have chosen $\theta(t)$ to be some
  isometry of $\R^n$, $\theta$ will be well defined if $t$ acts by
  conjugation on $A$ in the correct way because $A$ has finite index in any
  vertex group.

  Therefore we send $t$ to the orthogonal matrix $X{\mathcal M}(t)X^{-1}$.
Let us write the matrix ${\mathcal M}(t)$ as 
  ${\mathcal T}$ with $i,j$th entry equal to $\tau_{ij}$.
  We then have that $\theta(t)\theta(a_j)\theta(t^{-1})$ is the translation
  by
  \[X{\mathcal T} X^{-1}(\alpha_j)=X{\mathcal T}(e_j)=
    X(\tau_{1j}e_1+\ldots +\tau_{nj}e_n)=\tau_{1j}\alpha_1+\ldots +\tau_{nj}
  \alpha_n.\]
  As for the translation given by $\theta(ta_jt^{-1})$, we can assume
  $ta_jt^{-1}$ is in $A$ by replacing $a_j$ with a power if necessary,
  whereupon we have
  \[ ta_jt^{-1}=a_1^{\tau_{1j}}\ldots a_n^{\tau_{nj}}\]
by definition of the modular homomorphism  
and so $\theta(ta_jt^{-1})$ will be the translation of $\R^n$ by
  $\tau_{1j}\alpha_1+\ldots +\tau_{nj}\alpha_n$ too.
      Thus $\theta$ is well defined and so $G$ acts on
    $T\times \R^n$ by isometries.

This action of $G$ on $T\times \R^n$ is (metrically) proper: for instance
see \cite{dcvl} Lemma 4.5. This implies that the action is free (and in
particular faithful) because $G$ is torsion free.
It is also cocompact, as we can take the compact
set $C\times D\subseteq T\times \R^n$ where $C$ is a closed ball centred
at $v_0$ and containing a fundamental domain for the action of $G$ on $T$,
whereas $D$ is a closed parallelopiped containing a fundamental domain
for the action of $A$ on $\R^n$. Given any point $(t,x)\in T\times \R^n$,
we can find $g\in G$ which moves $t$ into $C$, and then take some element
$a\in A$ which moves $g(x)$ into $D$. As $a\in Stab(v_0)$ we have $a(C)=C$
so that $ag(t,x)\in C\times D$.

For the converse, first suppose that $G$ is a CAT(0) group, acting
geometrically (or even properly and semi-simply) on the CAT(0) space
$X$. Then by \cite{lrmn}
Theorem 6.4 (4') (itself based on the flat torus theorem of \cite{bh}
Theorem II.7.1, but replacing elements that normalise a copy of $\Z^n$
with elements that commensurate it), there is an inner product on
$A\otimes \R\cong \R^n$ such that the monodromy of any isometry of $X$ that
commensurates $A$ preserves this inner product, so that we can conjugate
the monodromy of $G$ to lie in $O(n)$.

Finally if $G$ has a finite index subgroup $H$ that is a CAT(0) group
then by the above we can
conjugate the monodromy ${\mathcal M}(G)$ so that the finite index subgroup
${\mathcal M}(H)$ lies in $O(n)$. We can now further conjugate so that
all of ${\mathcal M}(G)$ is in $O(n)$ too (either by mimicking the proof
when ${\mathcal M}(H)$ is trivial, or by $O(n)$ being a maximal compact
subgroup of $GL(n,\R)$).
\end{proof}  

We can now bring in examples from \cite{lrmn}.
\begin{co} \label{hhglm} The Leary - Minasyan group
given by the finite presentation
\[L=\langle t,a,b\,|\,[a,b],ta^2b^{-1}t^{-1}=a^2b,tab^2t^{-1}=a^{-1}b^2\rangle \]
is a $GBS_2$ group which is not virtually a HHG but which is quasi-isometric
to a HHG.
\end{co}
\begin{proof}
We see that $L$ is a $GBS_2$ group with base
$\Z^2$-subgroup $\langle a,b\rangle$ by using
the graph of groups decomposition of one vertex and one edge associated
to the HNN extension, with the edge subgroup having index 5
in the vertex group at both ends. Thus the Bass - Serre tree is the regular
tree $T_{10}$.
The monodromy is determined by the conjugation
action of the single stable letter and we can work this out explicitly
by noting that the above relations imply that
\[ta^5t^{-1}=a^3b^4 \mbox{ and }tb^5t^{-1}=a^{-4}b^3\] both hold in $G$.
Thus the monodromy of $L$ is generated by the matrix
\[\sma{rr} \frac{3}{5}& -\frac{4}{5}\\  \frac{4}{5}&\frac{3}{5}\fma\]
which has infinite order. Hence $L$ has infinite monodromy and is not
virtually a HHG by Corollary \ref{nthhg}.

However this matrix is orthogonal so Theorem \ref{latt}
says that $L$ acts properly and cocompactly by isometries on $T_{10}\times \R^2$,
as does $M=F_5\times\Z^2$. Thus by Svarc - Milnor
$L$ and $M$ are both quasi-isometric to this space and hence to each other,
but $M$ is certainly a HHG.
\end{proof}

\section[(QT) is not invariant under quasi-isometry]{Property (QT) is not
  invariant under quasi-isometry}

Another property of groups which, like HHGs, considers how a
group can act on different hyperbolic spaces, is
Bestvina, Bromberg and Fujiwara's property (QT) from \cite{bebf}.
Here a quasitree will always be a graph equipped with its
path metric which is quasi-isometric to a simplicial tree but which need
not be locally finite. A finitely
generated group $G$ (equipped with the word metric with respect to some
finite generating set)
is said to have ({\bf QT}) if it acts by isometries
on a finite product $P$ of
quasitrees equipped with the $\ell_1$ product metric
such that the orbit map (using an arbitrary basepoint of $P$)
is a quasi-isometric embedding from $G$ to $P$.
This is a strong definition: for instance it implies that this 
action is metrically proper. Nevertheless it is shown in \cite{bebf}
that mapping class groups and all residually finite hyperbolic groups have
(QT). It is also a consequence of \cite{drnjan} that every Coxeter group has
(QT).

Moreover property (QT) has good closure properties, in fact these are better
than for HHGs. It is certainly the case that if $G$ has (QT) and $H$ has
finite index in $G$ then $H$ also has (QT) but the definition ensures that
it also holds more generally when $H$ is an undistorted finitely generated
subgroup of $G$. Moreover if $G_1$ has (QT) via an action on the space $P_1$
and $G_2$ on the space $P_2$ then it can be checked directly that
$G_1\times G_2$ has (QT) by letting it act on the direct product
$P_1\times P_2$ (with the $\ell_1$ product metric) using the action
on each factor and summing the word metrics on $G_1$ and $G_2$, which
is the word metric on $G_1\times G_2$ with the obvious generating set.

However property (QT) is also a commensurability invariant because
if $H$ has index $i$ in $G$ and $H$ acts isometrically on the product $P$ of
quasitrees then we can induce an isometric action of $G$ on the product $P^i$
of copies of $P$. This will also turn the orbit map under $G$
into a quasi-isometric embedding. In particular a group which
virtually has (QT) does itself have (QT). Therefore the questions
of whether possessing (QT) and virtually possessing (QT) are quasi-isometry
invariants are in fact the same question. Here we will answer this by first
considering how an isometry on a product of graphs
$\Gamma_1\times \ldots \times\Gamma_m$
(each equipped with the path metric and then using the $\ell_1$ product
metric) breaks up, or at least virtually breaks up, into individual
isometries on each $\Gamma_i$. That this can be done using the $\ell_\infty$
metric is a result of W.\,Malone in \cite{malw} and we will now mimic his
proof for the (easier) $\ell_1$ case.

\begin{thm}   \label{lonpr} 
Suppose that $X=\Gamma_1\times \ldots \times \Gamma_m$ is a finite product of
connected
graphs, where each $\Gamma_i$ has the induced path metric and $X$
has the $l_1$ or the $l_\infty$
product metric. Suppose that $G$ is any group
acting by isometries on $X$. Then $G$
has a finite index subgroup $H$ which preserves factors
and acts as an isometry on each factor.
\end{thm}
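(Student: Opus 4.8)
The plan is to show that the factors of $X$ are permuted (up to finite index) by $G$, and that the resulting action on each factor is by graph isometries. The starting point is that in a finite product of graphs with the $\ell_1$ (or $\ell_\infty$) metric, each factor can be recognised metrically once we know the combinatorial structure: vertices of $X$ are tuples of vertices, and the edge set of $X$ consists of pairs of vertices differing in exactly one coordinate by a single edge. Thus an isometry of $X$ that is also a graph automorphism of the product graph must send ``coordinate directions'' to ``coordinate directions''. The first thing to pin down, therefore, is that every isometry $g \in G$ of $X$ sends vertices to vertices and is a graph automorphism; this follows because the path metric on a connected graph takes integer values on vertices and the integer points of $X$ (in the $\ell_1$ metric) are exactly the vertex tuples, while adjacency is detected by distance $1$. (For $\ell_\infty$ one argues similarly, being a little more careful, but we only need the $\ell_1$ case.)

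Next I would analyse how a single isometry $g$ acts on the edge-colouring of $X$ by coordinates. Fix a vertex $x$ and consider, for each $i$, the set of edges at $x$ in the $i$-th direction together with the infinite geodesic rays they generate inside the ``slice'' $\{x_1\}\times\cdots\times\Gamma_i\times\cdots\times\{x_m\}$. The key combinatorial point is that the parallelism classes of edges (two edges are parallel if they are opposite sides of an embedded $4$-cycle coming from two distinct coordinates, i.e. a product of two single edges) are preserved by any graph automorphism, and within a connected product these classes are exactly the coordinate directions. Hence $g$ induces a well-defined permutation $\sigma_g$ of $\{1,\dots,m\}$, and $g \mapsto \sigma_g$ is a homomorphism $G \to S_m$. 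Its kernel $H$ has finite index in $G$ and preserves each factor setwise. The main obstacle is precisely this step: verifying that ``same coordinate direction'' is an intrinsic, isometry-invariant notion of the product graph, which requires knowing that the factors $\Gamma_i$ have no accidental product-like $4$-cycles that could be confused with genuine cross-coordinate squares — one handles this by noting that an embedded square realising $d_{\ell_1}$-distance $2$ between opposite corners with the two intermediate vertices at mutual distance $2$ forces the two edge-pairs to lie in distinct coordinates, so the relation is metric after all.

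Finally, for $h \in H$ I would extract the induced map on each factor. Since $h$ preserves coordinate $i$ and is a graph automorphism of $X$, for fixed values of the other coordinates the restriction of $h$ to a slice $\Gamma_i$ is a graph isomorphism onto another slice $\Gamma_i$; one checks that this isomorphism is independent of the chosen values of the other coordinates, because moving along a $j$-direction edge ($j\neq i$) commutes with the $i$-slice structure under a coordinate-preserving automorphism. This yields a homomorphism $H \to \mathrm{Isom}(\Gamma_1)\times\cdots\times\mathrm{Isom}(\Gamma_m)$ whose composition with the diagonal embedding into $\mathrm{Isom}(X)$ recovers the original $H$-action, which is exactly the assertion that $H$ preserves factors and acts by isometries on each. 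The only remaining routine verification is that the product metric is genuinely reconstructed by summing (resp. maximising) the factor distances, which is immediate from the definition, so no real difficulty arises there.
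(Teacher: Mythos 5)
Your argument breaks down at the very first step. You claim that every isometry of $X$ sends vertices to vertices and is therefore an automorphism of the product graph, on the grounds that vertices are metrically distinguishable ("the integer points of $X$ \dots are exactly the vertex tuples"). This is false for graphs with the induced path metric: the geometric realisation of a cycle $C_k$ is a metric circle of circumference $k$, which is a homogeneous metric space, so an irrational rotation is an isometry carrying no vertex to a vertex; likewise the bi-infinite line graph admits translation by $\tfrac12$. Since the vertex set is not metrically characterised, there is no induced graph automorphism, and the entire combinatorial analysis that follows (edge colourings by coordinate, parallelism classes, $4$-cycles) has nothing to act on. A secondary problem lies in the step you flag as the "main obstacle": your proposed metric test for a cross-coordinate square (opposite corners at distance $2$, intermediate vertices at mutual distance $2$) is satisfied by any embedded $4$-cycle inside a single factor, e.g.\ $\Gamma_i=C_4$, so the test does not distinguish genuine product squares from squares internal to one factor. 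Recognising coordinate directions combinatorially is essentially the uniqueness of Cartesian prime factorisation of graphs (Sabidussi--Vizing), which is a real theorem with hypotheses, not a routine verification.

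The paper's proof avoids both issues by never mentioning vertices: it works purely with the geodesic metric space structure. The key observation is that a graph with its path metric is \emph{locally uniquely geodesic}, and in the $\ell_1$ product metric a geodesic between two points differing in only one coordinate must stay in that slice, while two points differing in at least two coordinates always admit more than one geodesic (Lemma \ref{unig}). Hence an isometry must carry a short uniquely-geodesic segment in the $i$-th slice to a segment varying in a single coordinate $j$; patching overlapping subgeodesics along an arbitrary geodesic in $\Gamma_i$ shows $j$ is constant, which yields the permutation of factors and the finite-index kernel. If you want to salvage your approach you would need to replace "graph automorphism" throughout by a purely metric device of this kind; as written, the proof does not go through.
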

\begin{proof}
  On giving a graph the induced path metric, it becomes not just a
metric space which is geodesic
but a geodesic metric space which is locally uniquely geodesic.
Thus if $X$ is given the $l_\infty$ metric then this result is a direct
consequence of \cite{malw}, which is established by using the fact that for
certain directions in the product space, geodesics are unique if they are
unique in the factors. This is certainly true in an $l_1$ product space
if we travel ``horizontally/vertically'', so in this case we proceed
as follows:
\begin{lem} \label{unig}
  Let $X=X_1\times\ldots\times X_m$ have the $l_1$ product
  metric $d_X$
  for geodesic metric spaces $(X_1,d_1),\ldots ,(X_m,d_m)$.
Suppose we have  two points ${\bf x},{\bf y} \in X$ of the form
\[{\bf x}=(x_1,\ldots ,x_{i-1},x_i,x_{i+1},\ldots ,x_m)\mbox{ and }{\bf y}=
  (x_1,\ldots ,x_{i-1},y_i,x_{i+1},\ldots ,x_m)\]
for $1\leq i\leq m$. Then any geodesic between $\bf x$ and $\bf y$
only varies in the $i$th coordinate. Conversely if we have two
points ${\bf x},{\bf y} \in X$ which differ in at least two coordinates
then there is more than one geodesic from $\bf x$ to $\bf y$ in $X$.
\end{lem}
\begin{proof}
  Suppose we have a (unit speed) geodesic
  $\boldsymbol{\gamma}:[0,d]\rightarrow X$ from $\bf x$ to $\bf y$ where
  $d=d_X({\bf x},{\bf y})$. If there is $k\neq i$ and $t\in (0,d)$
  such that the $kth$ coordinate $u_k$ of
  $\boldsymbol{\gamma}(t)=(u_1,\ldots ,u_m)$ is not equal to $x_k$ then
  \begin{eqnarray*} d_i(x_i,y_i)=
    d_X({\bf x},{\bf y})&=&d({\bf x},\boldsymbol{\gamma}(t))
      +d(\boldsymbol{\gamma}(t),{\bf y})\\
                        &=&\Sigma_{j=1,j\neq i}^m \left(d_j(x_j,u_j)
                            +d_j(u_j,x_j)\right)
                            +d_i(x_i,u_i)+d_i(u_i,y_i)\\
                        &\geq& d_k(x_k,u_k)+d_k(u_k,x_k)+
                               d_i(x_i,u_i)+d_i(u_i,y_i)\\
                          &>& d_i(x_i,y_i)\mbox { as } x_k\neq u_k
\end{eqnarray*}
which is a contradiction.

Now suppose we have two points ${\bf x}=(x_1,\ldots ,x_m)$ and
${\bf y}=(y_1,\ldots ,y_m)$ in $X$ which differ in (without loss of generality)
at least the first two coordinates. Take constant speed
geodesics $\gamma_i$ in each $X_i$ running from $x_i$ to $y_i$ and set
\[\boldsymbol{\delta}_1(t)=(x_1,\gamma_2(t),\ldots ,\gamma_m(t)),
  \boldsymbol{\delta}_2(t)=(\gamma_1(t),y_2,\ldots ,y_m))\]
and
\[\boldsymbol{\delta}_3(t)=(\gamma_1(t),x_2,\gamma_3(t),\ldots ,\gamma_m(t)),
  \boldsymbol{\delta}_4(t)=(y_1,\gamma_2(t),y_3,\ldots ,y_m))\]
where in $\boldsymbol{\delta}_1$ the geodesics
$\gamma_2,\ldots ,\gamma_m$ are reparametrised to have domain
$[0,d_X(({\bf x},{\bf y})-d_1(x_1,y_1)]$ but $\gamma_1$ and hence
$\boldsymbol{\delta}_2$ remain unit speed. We also do the same
for $\boldsymbol{\delta}_3$ and $\boldsymbol{\delta}_4$.
Then it is easily checked that following $\boldsymbol{\delta}_1$ then
$\boldsymbol{\delta}_2$
and also following $\boldsymbol{\delta}_3$ then $\boldsymbol{\delta}_4$
are both unit speed geodesics
from $\bf x$ to $\bf y$. Moreover they are distinct as 
the first geodesic passes through a point that projects to $(x_1,y_2)$
in the first two coordinates, whereas the second geodesic never does 
since $x_1\neq y_1$ and $x_2\neq y_2$.
\end{proof}

Now we return to the setting in Theorem \ref{lonpr}. Each $\Gamma_i$
is a geodesic metric space (which we
assume without loss of generality is not a single point). Thus given
any isometry $g$ of $X$, take any
point ${\bf x}=(x_1,\ldots ,x_i,\ldots ,x_m)$ in $X$ and for a given
$1\leq i\leq m$,
let $z_i$ be another
point in $\Gamma_i$ near $x_i$ such that there is only one geodesic
$\gamma_i$ in $\Gamma_i$ from $x_i$ to $z_i$. Then by Lemma \ref{unig}
we have that 
$\boldsymbol{\gamma}(t)$, which is equal to
$(x_1,\ldots ,\gamma_i(t),x_{i+1},\ldots ,x_m)$,
is the unique geodesic in $X$ between $\bf x$ and
${\bf z}=
(x_1,\ldots ,x_{i-1},z_i,x_{i+1},\ldots ,x_m)$ and so under $g$
it must map to
a unique geodesic between $g({\bf x})$ and $g({\bf z})$, by considering
$g^{-1}$. Thus by Lemma \ref{unig} again
these two points (which are not the same because $g$ is
a bijection) differ in only one coordinate, say the $j$th.

Now take $y_i$ to be an arbitrary point in $\Gamma_i$. Given any geodesic
in $\Gamma_i$ from $x_i$ to $y_i$, we can split it into a finite number
of subgeodesics, with some overlap that is more than a point, where each
subgeodesic is the unique geodesic between its own endpoints. Thus the image
of each of these subgeodesics under $g$ is a subset of $X$ where only
one coordinate varies, and as these subgeodesics overlap this will always
be the $j$th coordinate. In other words given $1\leq i\leq m$ we have
$1\leq j\leq m$ such that for fixed
$x_1\in \Gamma_1,\ldots ,x_{i-1}\in \Gamma_{i-1},x_{i+1}\in\Gamma_{i+1},\ldots ,
x_m\in\Gamma_m$ there is
$y_1\in \Gamma_1,\ldots ,y_{j-1}\in \Gamma_{j-1},y_{j+1}\in\Gamma_{j+1},\ldots ,
y_m\in \Gamma_m$ and a function $f:\Gamma_i\rightarrow \Gamma_j$
with

\[g(x_1,\ldots ,x_{i-1},x,x_{i+1},\ldots ,x_m)=
  (y_1,\ldots ,y_{j-1},f(x),y_{j+1},\ldots ,y_m)\]
for all $x\in \Gamma_i$. Moreover $f$ is a
bijection and an isometry because $g$ is. By varying over all $m$
coordinates we see that $g$ permutes $\Gamma_1,\ldots ,\Gamma_m$ and for each
$1\leq k\leq m$ it acts as an isometry from $\Gamma_k$ to $\Gamma_{g(k)}$
(but clearly only isometric factors can be permuted). We can therefore
set
$H$ to be the kernel of this finite permutation action, so that $H$ has
finite index in $G$, preserves factors and acts as an isometry on each
factor.
\end{proof}                              

We can now use the same example as for HHGs to show that property (QT)
is not a quasi-isometric invariant.
\begin{thm} \label{ashl}
A $GBS_n$ $G$ group with infinite monodromy does not
possess any metrically proper action by isometries
  on any finite product of graphs (using the path metric)
  which are quasitrees with the $l_1$ (or $l_\infty$) product metric.
  In particular $G$ does not have property (QT).
  \end{thm}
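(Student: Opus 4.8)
The plan is to reduce this to Theorem \ref{act} via Theorem \ref{lonpr} and Lemma \ref{cycp}, the one extra ingredient being that an isometry of a quasitree can never be parabolic. First I would record that a connected graph which is a quasitree, equipped with its path metric, is a geodesic metric space quasi-isometric to a simplicial tree and hence is $\delta$-hyperbolic for some $\delta\geq 0$; thus a finite product of such graphs is exactly a finite product of hyperbolic spaces in the sense of Section 4. Now suppose, for a contradiction, that a $GBS_n$ group $G$ with infinite monodromy acts metrically properly and by isometries on $P=\Gamma_1\times\ldots\times\Gamma_m$, where each $\Gamma_i$ is a connected graph which is a quasitree and $P$ carries the $\ell_1$ (or $\ell_\infty$) product metric.

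By Theorem \ref{lonpr} there is a subgroup $H\leq_f G$ which preserves every factor $\Gamma_i$ and acts by isometries on it, and on restricting the action $H$ still acts metrically properly on $P$. Since $H$ has finite index in the $GBS_n$ group $G$ it is itself a $GBS_n$ group, with base $\Z^n$-subgroup $H\cap A$, and (as already used in the proof of Theorem \ref{pacg}) it still has infinite monodromy. Thus Theorem \ref{act} applies to $H$ and yields a non-trivial, hence infinite order, element $z\in H\cap A$ which acts loxodromically in no isometric action of $H$ on a hyperbolic space. Writing $z=(z_1,\ldots,z_m)$ with $z_i\in Isom(\Gamma_i)$, the isometry $z_i$ is precisely the image of $z$ under the isometric action of $H$ on the hyperbolic space $\Gamma_i$ obtained by projecting the factor-preserving action to the $i$th coordinate, so no $z_i$ acts loxodromically on $\Gamma_i$.

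The crux is now to rule out $z_i$ being parabolic: I claim that an isometry of a quasitree with unbounded orbits is necessarily loxodromic. This should follow from Manning's bottleneck characterisation of quasitrees: if $z_i$ had unbounded orbits on $\Gamma_i$ then, fixing $x\in\Gamma_i$ and concatenating the geodesics $[z_i^{kN}x,z_i^{(k+1)N}x]$ for a suitably large $N$, the bottleneck constant forces the resulting path through the points $z_i^{kN}x$ to stay uniformly close to a geodesic, so $d_{\Gamma_i}(z_i^{kN}x,x)$ grows linearly in $k$ and $z_i$ is loxodromic. Granting this, every $z_i$ acts elliptically on $\Gamma_i$, so by Lemma \ref{cycp}(i) the element $z$ acts elliptically on $P$ and hence $\langle z\rangle$ has a bounded orbit there. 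Fixing $x_0\in P$ and a bound $R$ for this orbit, the infinite set $\{z^k\,|\,k\in\Z\}$ then lies inside $\{h\in H\,|\,d_P(h(x_0),x_0)\leq R\}$, contradicting metric properness of the $H$-action. This establishes the first assertion.

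For the final sentence, if $G$ had property (QT) then it would act by isometries on a finite product of quasitrees (connected graphs with the path metric) with the $\ell_1$ product metric so that the orbit map $h\mapsto h(x_0)$ is a quasi-isometric embedding; as $G$ is finitely generated this orbit map is in particular metrically proper, so $G$ would admit a metrically proper isometric action on a finite product of quasitrees, contradicting what was just proved. Hence a $GBS_n$ group with infinite monodromy cannot have (QT). I expect the only real obstacle to be the parabolic step, namely pinning down (or citing cleanly) the fact that quasitree isometries are never parabolic; everything else is formal once Theorems \ref{lonpr} and \ref{act} and Lemma \ref{cycp} are in hand.
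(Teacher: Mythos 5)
Your proposal is correct and follows essentially the same route as the paper: pass to a factor-preserving finite index subgroup via Theorem \ref{lonpr}, apply Theorem \ref{act} to get a never-loxodromic element $z$, rule out parabolic behaviour on each quasitree factor, and conclude that $\langle z\rangle$ has bounded orbits, contradicting metric properness (and hence (QT)). The one step you flag as needing care --- that an isometry of a quasitree is never parabolic --- is exactly the point the paper handles by citing Manning \cite{man}, so your bottleneck sketch is a fine substitute for that citation.
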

  \begin{proof} If $G$ did have such an action then
by Theorem \ref{lonpr} $G$ would have a finite index
subgroup $H$ (also acting metrically
properly) which preserves factors
and acts by isometries on each factor. But $H$ is also a $GBS_n$ group
with infinite monodromy. Thus by Theorem \ref{act} applied to $H$,
there is a non trivial
element $z$ of $H$ which does not act loxodromically in any action of
$H$ on a hyperbolic space. But as isometries of quasitrees
can only be loxodromic or elliptic by \cite{man}
the action of the infinite order element $z$ is
elliptic on each quasitree.
Hence the action of $\langle z\rangle$ on $X$
is also bounded, so we have an infinite subgroup of $G$ acting on $X$
with bounded orbits, meaning that the action of $G$ is not
metrically proper. Therefore the orbit map from $G$ to $X$ cannot be a
quasi-isometric embedding because because there are only finitely
many elements in $G$ with word length at most a given value.
\end{proof}
\begin{co} \label{qt}
  The property of having (QT) is not preserved under quasi-isometries.
\end{co}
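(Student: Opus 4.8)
The plan is to reuse the Leary--Minasyan group $L$ from Theorem \ref{hhglm} exactly as we did for the HHG statement. Recall that there $L$ is exhibited as a $GBS_2$ group with infinite monodromy which is quasi-isometric to $M=\Z^2\times F_5$ (both acting properly and cocompactly on $T_{10}\times\R^2$, so they are quasi-isometric by Švarc--Milnor). It therefore suffices to show that $M$ has (QT) while $L$ does not.

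For $M$, I would invoke the closure properties of (QT) recorded earlier in Section 6: the group $\Z$ has (QT) since it acts by isometries on the quasitree $\R$ with the orbit map a quasi-isometric embedding, the free group $F_5$ has (QT) since it acts on its own Cayley tree, and (QT) passes to direct products by acting on the $\ell_1$-product of the two factor spaces and summing the word metrics. Hence $M=\Z^2\times F_5$ has (QT). For $L$, the work is already done: $L$ is a $GBS_n$ group (with $n=2$) of infinite monodromy, so Theorem \ref{ashl} applies directly to give that $L$ does not possess property (QT). Putting these together, $L$ and $M$ are quasi-isometric, one has (QT) and the other does not, so (QT) is not a quasi-isometry invariant.

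There is essentially no obstacle remaining: all the substantive content lives in Theorem \ref{ashl} (which in turn rests on Theorem \ref{lonpr} reducing a product action to factor-preserving isometries, on Theorem \ref{act} producing a non-loxodromic element, and on the fact from \cite{man} that quasitree isometries are elliptic or loxodromic) and in the quasi-isometry $L\simeq M$ from \cite{lrmn}. The only thing worth being slightly careful about is that the claimed example for (QT) should not secretly be circular with the HHG argument; but it is not, since (as the paper already emphasises) the (QT) case does not need any analysis of product acylindrical actions, only the statement of Theorem \ref{ashl}. So the corollary is a short assembly of already-established facts.
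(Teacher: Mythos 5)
Your proposal is correct and follows essentially the same route as the paper: $L$ fails (QT) by Theorem \ref{ashl} since it has infinite monodromy, while the quasi-isometric group $M=\Z^2\times F_5$ has (QT). The only cosmetic difference is that you derive (QT) for $M$ from the product closure property, whereas the paper directly cites the geometric action on $\R\times\R\times T_{10}$ and Švarc--Milnor; these amount to the same thing.
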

\begin{proof} We take
  the Leary - Minasyan group $L$ as given in Corollary \ref{hhglm}
  and again note that it has infinite monodromy, so it
  does not  have (QT) by Theorem \ref{ashl}. However it is quasi-isometric
    to $M=\Z^2\times F_5$ which acts geometrically on the product
    $P=\R\times\R\times T_{10}$ of
    simplicial trees and hence the orbit map from $M$ to $P$ is a
    quasi-isometry by Svarc - Milnor.
  \end{proof}
Note that as $P$ is a product not just of arbitrary quasitrees but of
bounded valence simplicial trees, changing the property (QT) by
replacing quasitrees with trees or inserting the bounded valence
condition, in any combination, will still not result in a quasi-isometry
invariant.

Note also that
  if we equip our product spaces with the $\ell_2$ product metric
  then the Leary - Minasyan group does indeed act geometrically on
  a product of quasi-trees with the orbit map a quasi-isometric embedding.
  Here the equivalent of Theorem \ref{lonpr} does not hold as we can
  have a de Rham factor (see \cite{foly} for this case). However the
  definition of (QT) uses the $\ell_1$ product metric.

  Finally we also note that the Leary - Minasyan group acts geometrically
  on the CAT(0) cube complex $P$ by {\it isometries} using the CAT(0)
metric  but it has no geometric
  action on any CAT(0) cube complex by {\it cubical automorphisms} and nor
  does any finite index subgroup. This is noted in \cite{lrmn} as a 
consequence
  of \cite{hupr}.

\Address

\end{document}